\newcommand{\texorpdfstring}[2]{#1} 
     \DeclareMathAlphabet{\mathbf}{OT1}{cmr}{b}{n}
     \def\fo@ter{\hss\footnotesize\bfseries\thepage\hss}
     \def\ps@myheadings{%
      \let\@oddfoot\fo@ter
      \let\@evenfoot\@oddfoot
      \def\@oddhead{\footnotesize\scshape\hfil\rightmark\hfil}%
      \let\@evenhead\@oddhead
      \let\@mkboth\@gobbletwo
      \let\sectionmark\@gobble
      \let\subsectionmark\@gobble}
     \def\ps@plain{\let\@mkboth\@gobbletwo
      \let\@oddhead\@empty
      \let\@oddfoot\fo@ter
      \let\@evenhead\@empty
      \let\@evenfoot\@oddfoot}
     \renewcommand\section{\@startsection {section}{1}{\z@}%
       {-3.5ex \@plus -1ex \@minus -.2ex}%
       {2.3ex \@plus.2ex}%
       {\normalfont\large\bfseries}}
     \renewcommand\subsection{\@startsection{subsection}{2}{\z@}%
       {-3.25ex\@plus -1ex \@minus -.2ex}%
       {1.5ex \@plus .2ex}%
       {\normalfont\normalsize\bfseries}}
     \renewcommand\subsubsection{\@startsection{subsubsection}{3}{\z@}%
       {-3.25ex\@plus -1ex \@minus -.2ex}%
       {1.5ex \@plus .2ex}%
       {\normalfont\normalsize\scshape}}
     \renewcommand\paragraph{\@startsection{paragraph}{4}{\z@}%
       {3.25ex \@plus1ex \@minus.2ex}%
       {-1em}%
       {\normalfont\normalsize\scshape}}
     \def\@maketitle{%
      \newpage
      \null
      \rightline{\footnotesize\@date}%
      \vskip 2em%
     \begin{center}%
      \let \footnote \thanks
       {\Large\@title\par}%
       \vskip 1.5em%
       {\large
        \lineskip .5em%
        \begin{tabular}[t]{c}%
         \normalfont\@author
        \end{tabular}\par}%
     \end{center}%
      \par
      \vskip 1.5em}
      \renewenvironment{abstract}{%
       \titlepage
       \null\vfil
       \@beginparpenalty\@lowpenalty
       \begin{center}%
        \bfseries \abstractname
        \@endparpenalty\@M
       \end{center}}%
      {\par\vfil\null\endtitlepage}
      \renewenvironment{abstract}{%
       \if@twocolumn
        \section*{\abstractname}%
       \else
        \small
        \quotation
        \noindent{\bfseries\abstractname.}\quad\ignorespaces
       \fi}
      {\if@twocolumn\else\endquotation\fi}
     \renewenvironment{proof}[1][\proofname]{\par
        \pushQED{\qed}%
        \normalfont \topsep6\p@\@plus6\p@\relax
        \trivlist
        \item[\hskip\labelsep
              \scshape 
          \hskip\parindent#1\@addpunct{.}]\ignorespaces 
      }{%
        \popQED\endtrivlist\@endpefalse
     }
\newtheoremstyle{proclamation}
  {\topsep}
  {\topsep}
  {\slshape}
  {\parindent}
  {\scshape}
  {.}
  {.5em}
  {}
\theoremstyle{proclamation}
\newtheorem{lemma}{Lemma}
\newtheorem{proposition}{Proposition}
\newtheorem*{main-theorem}{Theorem}
\newtheorem*{corollary}{Corollary}
\newtheoremstyle{remark}
  {\topsep}
  {\topsep}
  {\normalfont}
  {\parindent}
  {\scshape}
  {.}
  {.5em}
  {}
\theoremstyle{remark}
\newtheorem{definition}{Definition}
\newtheorem{remark}{Remark}
\newcommand{\onehalf}{{\frac12}}
\newcommand{\C}{\mathbf{C}}
\newcommand{\Ctimes}{\C^\times}
\newcommand{\Q}{\mathbf{Q}}
\newcommand{\R}{\mathbf{R}}
\newcommand{\Z}{\mathbf{Z}}
\renewcommand{\O}{\mathcal{O}} 
\newcommand{\torus}{\mathbf{T}}
\newcommand{\hilb}{\mathcal{H}}
\newcommand{\bo}{\mathcal{L}} 
\newcommand{\tc}{{\mathcal{L}^1}} 
\newcommand{\hs}{{\mathcal{L}^2}} 
\newcommand{\detop}{\mathcal{T}} 
\newcommand{\detopone}{\detop_1} 
\newcommand{\ck}{\tau} 
\newcommand{\bd}{\partial} 
\newcommand{\fm}[1]{\mathcal{M}^{#1}} 
\newcommand{\fmext}{\mathcal{E}} 
\newcommand{\mult}[1]{M({#1})} 
\newcommand{\toeplitz}[1]{T({#1})} 
\newcommand{\vna}{\mathcal{N}} 
\newcommand{\pairing}[2]{\langle#1,#2\rangle}
\newcommand{\norm}[1]{\|#1\|}
\newcommand{\hsnorm}[1]{\norm{#1}_{\hs}} 
\newcommand{\symb}[2]{\{#1,#2\}} 
\newcommand{\addcommutator}[2]{{#1}{#2}-{#2}{#1}} 
\newcommand{\multcommutator}[2]{[#1,#2]} 
\newcommand{\jt}[2]{\jointtorsion(#1,#2)} 
\newcommand{\ft}[1]{\widehat{#1}} 
\newcommand{\ftoperator}{\mathcal{F}} 
\newcommand{\crossprod}{\ltimes} 
\newcommand{\variable}{{}\cdot{}} 
\DeclareMathOperator{\pv}{P.V.} 
\DeclareMathOperator{\ord}{ord}
\DeclareMathOperator{\Hom}{Hom}
\DeclareMathOperator{\Res}{Res} 
\DeclareMathOperator{\restrictionmap}{res} 
\newcommand{\res}[2]{\restrictionmap_{#1}^{#2}}
\DeclareMathOperator{\ad}{Ad} 
\DeclareMathOperator{\tr}{Tr}
\DeclareMathOperator{\jointtorsion}{\tau_{\textrm{CP}}}
\DeclareMathOperator{\Spec}{Spec}
\DeclareMathOperator{\Diff}{Diff}
\DeclareMathOperator{\GL}{GL}
\DeclareMathOperator{\steinberg}{St} 
\renewcommand{\phi}{\varphi}
\let\bs\backslash
\let\cnx\nabla
\let\isom\cong
\let\comp\circ
\let\injlim\varinjlim
\let\longto\longrightarrow
\let\injection\hookrightarrow
\begin{document}

\title{Fredholm modules and the Beilinson--Bloch regulator}
\author{Eugene Ha and Victor Kasatkin}
\date{\empty}
\pagestyle{myheadings}
\markright{Fredholm modules and the Beilinson--Bloch regulator\hfill
  E.~Ha, V.~Kasatkin}

\maketitle

\begin{abstract}
We prove an operator-theoretic reconstruction of the Beilinson--Bloch regulator for compact Riemann surfaces, using loop operators and the Connes--Karoubi character for Fredholm modules. The proof includes a new computation of the Connes--Karoubi character for Steinberg symbols of the circle, which relies on the Helton--Howe determinant theory, but not on the Carey--Pincus theory of joint torsion.
\end{abstract}

\section{Introduction}

The main objective of this article is to prove a reconstruction of the Beilinson--Bloch regulator for compact Riemann surfaces in terms of the Connes--Karoubi character for Fredholm modules. In this introduction, we outline our construction and clarify our contribution in relation to other works. Our work owes much to the work of Connes and Karoubi \cite{Connes-Karoubi:1984}, Pressley, Segal, and Wilson \cite{Segal-Wilson:1985}, \cite{Pressley-Segal:1986}, and Helton and Howe \cite{Helton-Howe:1973}.

\subsection{The Beilinson--Bloch regulator via the Connes--Karoubi character (an overview)}

Let $X$ be a compact Riemann surface. The Beilinson--Bloch regulator of~$X$ is a Chern character map
\begin{equation}
\label{eq:full-regulator}
r\colon K_2(X)\longto H^1(X,\Ctimes).
\end{equation}
It was first constructed for elliptic curves by Bloch \cite{Bloch:2000}, and for arbitrary compact Riemann surfaces by Beilinson \cite{Beilinson:1980}, as just one instance of his far-reaching conjectures relating algebraic $K$-theory of varieties (defined over~$\Q$) to special values of $L$-functions \cite{Beilinson:1984}. The essence of the construction of the regulator~\eqref{eq:full-regulator}, which will be reviewed in~\S\ref{sec:review-bb-regulator}, is its construction on the Zariski open subsets of~$X$, i.e., the construction of a family of homomorphisms
\[
r_S\colon K_2(\O(X\setminus S))
\longto\Hom(\pi_1(X\setminus S),\Ctimes)
\isom H^1(X\setminus S,\Ctimes),
\]
where $S$ ranges over the finite subsets of~$X$, and $\O(X\setminus S)$ is the ring of analytic functions on~$X\setminus S$. (We will suppress base points in this introduction.)

For a smooth loop $\gamma\colon S^1\to X\setminus S$, one gets a representation of the ring $\O(X\setminus S)$ on the Hilbert space~$L^2(S^1)$ via multiplication operators:
\[
\mu_\gamma\colon\O(X\setminus S)\longto \bo(L^2(S^1)),
\quad
f\mapsto\mult{f\comp\gamma}.
\]
The key observation here---due to Pressley, Segal, and Wilson \cite{Segal-Wilson:1985}, \cite{Pressley-Segal:1986}---is that the representation $\mu_\gamma$ is a $2$-summable Fredholm module \cite{Connes-Karoubi:1984}, \cite{Connes-Karoubi:1988}. The Connes--Karoubi character for Fredholm modules, ibid., then furnishes a homomorphism
\[
\ck_{\mu_\gamma}\colon K_2(\O(X\setminus S))\longto\Ctimes.
\]
Our main theorem is the following result (see \S\ref{sec:main-theorem}).

\begin{main-theorem}
For every $u\in K_2(\O(X\setminus S))$ and every $[\gamma]\in\pi_1(X\setminus S)$, we have
\begin{equation}
\label{eq:reconstruction-formula}
\pairing{r_S(u)}{[\gamma]}=\ck_{\mu_\gamma}(u),
\end{equation}
where $\gamma\colon S^1\to X\setminus S$ is any smooth loop in the homotopy class~$[\gamma]$.
\end{main-theorem}

In particular, one deduces the non-obvious fact that the character $\ck_{\mu_\gamma}$ depends only on the homotopy class of~$\gamma$, and is multiplicative in~$[\gamma]$, because such is the case for the Beilinson--Bloch regulator.

The theorem will be proved in two steps. First, the equality~\eqref{eq:reconstruction-formula} is established when $u$ is a Steinberg symbol (Proposition~\ref{prop:bb-regulator-agreement-on-steinberg-symbols}); our proof of this known fact is new, and, unlike other proofs in the literature, does not depend on establishing equality with the Carey--Pincus joint torsion. Second, the continuity of the map $\gamma\mapsto\ck_{\mu_\gamma}(u)$ (Lemma~\ref{lemma:continuity-regulator-loop-variable}) is exploited to establish equality~\eqref{eq:reconstruction-formula} for \emph{arbitrary} elements $u\in K_2(\O(X\setminus S))$ (\S\ref{sec:main-theorem}). In this way, a direct (and possibly tricky) proof of the homotopy invariance of $\ck_{\mu_\gamma}$, and of the multiplicativity in~$[\gamma]$, is overcome.

\subsection{Related works}
\label{sec:related-works}

Since the seminal work of Karoubi \cite{Karoubi:1982}, \cite{Karoubi:1983}, \cite{Karoubi:1987}, and Connes and Karoubi \cite{Connes-Karoubi:1984}, \cite{Connes-Karoubi:1988}, on relative $K$-theory and its Chern character map to cyclic homology, it has been clear that the Connes--Karoubi character is relevant to the study of regulators. Indeed, Karoubi himself compared Borel's regulator to the Connes--Karoubi character for~$\C$ \cite{Karoubi:1987}. Recently, Tamme has constructed an analogue of Karoubi's relative Chern character, and has established a precise comparison to Beilinson's Chern character \cite{Tamme:2012}.

Prior to the present work, connections between the Beilinson--Bloch regulator and the Connes--Karoubi character had been identified in two settings:

\begin{enumerate}
\item Bloch \cite[\S2]{Bloch:1981} generalized the canonical (infinite-dimensional) Lie algebra extension corresponding to the Beilinson--Bloch regulator (cf.~first remark in~\cite{Beilinson:1980}). The basic ingredient of Bloch's extension is an arbitrary ring~$R$ containing $\onehalf$. When $R=C^\infty(S^1)$, Delabre observed \cite{Delabre:1998} that Bloch's extension coincides with the Lie algebra extension corresponding to the Connes--Karoubi character $\ck_{S^1}\colon K_2(C^\infty(S^1))\to\Ctimes$ \cite[5.9]{Connes-Karoubi:1988}.
\item Carey and Pincus identified a connection between the Beilinson--Bloch regulator and Toeplitz operator determinants \cite{Carey-Pincus:1999}. Specifically, they showed \cite[\S5]{Carey-Pincus:1999} that the joint torsion of Toeplitz operators is given by Beilinson's monodromy formula~\eqref{eq:monodromy-formula}.
\end{enumerate}

A key computation in our work is the evaluation of the Connes--Karoubi character $\ck_{S^1}$ on Steinberg symbols $\symb fg\in K_2(C^\infty(S^1))$. This is known to equal the joint torsion of Toeplitz operators \cite[Prop.~1]{Carey-Pincus:1999}, thanks to a combination of recent results of Kaad \cite{Kaad:2011a} and Migler \cite{Migler:2014} that enable the joint torsion, in this case, to be matched with the Connes--Karoubi character. (For details, see the remark at the end of~\S\ref{sec:R_S-steinberg-symbols}.) However, since our work does not concern joint torsion, we choose to execute a direct, and considerably shorter, computation of $\ck_{S^1}\symb fg$, which employs the determinant theory of Helton and Howe \cite{Helton-Howe:1973}, but is otherwise fairly elementary. The validity of the reconstruction formula~\eqref{eq:reconstruction-formula} for the full $K$-group is, to the best of our knowledge, a new result.

Lastly, let us mention that analogues of $\ck_{S^1}\symb fg$ have been computed for higher odd-dimensional manifolds (with Dirac operator) by Kaad \cite{Kaad:2011b} and, in much greater generality, Bunke \cite{Bunke:2014}.

\subsection{Organization of this article}

The Beilinson--Bloch regulator and the Connes--Karoubi character are reviewed in \S\S\ref{sec:review-bb-regulator}, \ref{sec:review-ck-character}; formula~\eqref{eq:monodromy-formula} and Proposition~\ref{prop:ck-formula} summarize the main facts that are required later. A proof of the Pressley--Segal--Wilson theorem on the fundamental Fredholm structure of loop operators is recalled in \S\ref{sec:loop-operators}. The reconstruction of the regulator from the Connes--Karoubi character is carried out in~\S\ref{sec:reconstruction-bb-regulator}. Some open questions, and our motivation for undertaking this work, are discussed in~\S\ref{sec:conclusion}.

\paragraph*{Notational conventions.}

We denote by $X$ a fixed compact Riemann surface, and by $S$, $S'$, etc., finite sets of points of~$X$. The standard notation for commutators is reserved for the multiplicative case: $\multcommutator ab=aba^{-1}b^{-1}$. (Additive commutators, $\addcommutator ab$, will be written out in full.) The Toeplitz operator with symbol~$f$ is denoted by~$\toeplitz f$.

\subsection*{Acknowledgments}

This work is derived from a 2012 preprint by E.H., which, however, was marred by technical errors and omissions. These were later fixed in the Ph.D.\ thesis of V.K.\ \cite{Kasatkin:2015}, which also provided the proof of our main theorem.

E.H.\ is grateful to Andreas Thom for pointing out a gap in the original 2012 preprint, for bringing the work of Carey and Pincus to his attention, for helpful discussions on Deninger's work (which, in particular, motivated the present investigation; see~\S\ref{sec:entropy-regulators-determinants}), and, above all, for generously funding a position that enabled the initial work. V.K.\ thanks his advisor, Prof.\ Matilde Marcolli, for her guidance and expert advice. Both authors are indebted to her for catalyzing their collaboration.

\section{The Beilinson--Bloch regulator}
\label{sec:review-bb-regulator}

We review Beilinson's construction of the regulator for compact Riemann surfaces. There are two parts to this: a general framework (\S\ref{sec:abstract-framework-regulator}), and an explicit realization of it (formula~\eqref{eq:monodromy-formula}).

\subsection{General framework for the regulator}
\label{sec:abstract-framework-regulator}

The main task in constructing the regulator~$r$ \eqref{eq:full-regulator} is its construction on the generic point~$\eta=\Spec\C(X)$, i.e., the construction of a homomorphism
\begin{equation}
\label{eq:regulator-generic-point}
r_\eta\colon K_2(\C(X))\longto
H^1(\Spec\C(X),\Ctimes)
:=\injlim\nolimits_SH^1(X\setminus S,\Ctimes),
\end{equation}
where $\C(X)$ is the field of meromorphic functions on~$X$, and $S$ runs over (increasing) finite subsets of~$X$. The function field~$\C(X)$ is canonically isomorphic to the stalk of the structure sheaf at the generic point, i.e.,
\begin{equation}
\label{eq:generic-point-stalk}
\C(X)\isom\injlim\nolimits_S\O(X\setminus S),
\end{equation}
where $\O(X\setminus S)$ is the ring of analytic functions on~$X\setminus S$. Since $K_2$ is a continuous functor, we have
\[
K_2(\C(X))\isom\injlim\nolimits_SK_2(\O(X\setminus S)).
\]
Therefore constructing $r_\eta$ boils down to constructing an $S$-compatible family of homomorphisms
\begin{equation}
\label{eq:S-regulator}
r_S\colon K_2(\O(X\setminus S))\longto H^1(X\setminus S,\Ctimes).
\end{equation}
We may then set
\[
r_\eta=\injlim\nolimits_S r_S
\colon K_2(\C(X))\longto H^1(\Spec\C(X),\Ctimes).
\]

Granting the construction of~$r_\eta$, the regulator on~$X$ \eqref{eq:full-regulator} comes from the following diagram with exact rows:
\[
\begin{CD}
@. K_2(X) @>>> K_2(\C(X)) @>{\oplus\tau_x}>> \bigoplus\limits_{x\in X}\Ctimes \\
@. @. @VV{r_\eta}V @| \\
0 @>>> H^1(X,\Ctimes) @>>> H^1(\Spec\C(X),\Ctimes)
  @>{\oplus\Res_x}>> \bigoplus\limits_{x\in X}\Ctimes
\end{CD}
\]
The top row is the localization sequence in $K$-theory induced by the inclusion $\Spec\C(X)\to X$. The quantity $\tau_x\symb fg=(-1)^{\ord_xf\ord_xg}(f^{\ord_xg}/g^{\ord_xf})(x)$ is the tame symbol. The bottom row is the Gysin sequence in cohomology (here $\Res_x$ is the residue map at~$x$). The regulator~$r_\eta$ on the generic point connects the two rows, making the square commute \cite[Prop.~1.19]{Bloch:1981}. Therefore, because the rows are exact, there is an induced homomorphism
\[
r\colon K_2(X)\longto H^1(X,\Ctimes)
\]
maintaining the commutativity of the full diagram. This is the \emph{Bei\-lin\-son--Bloch regulator} of~$X$.

\subsection{Beilinson's construction of the regulator}

Beilinson~\cite{Beilinson:1980} gave a direct construction of~$r_\eta$---only implicitly defining the family~$\{r_S\}_S$ \eqref{eq:S-regulator}---by making critical use of Matsumoto's Theorem on $K_2$ of fields (see \cite[4.3.15]{Rosenberg:1994}), and of the isomorphism
\begin{equation}
\label{eq:monodromy-mapping}
H^1(X\setminus S,\Ctimes)\isom\Hom(\pi_1(X\setminus S,x_0),\Ctimes)
\end{equation}
given by the monodromy mapping, upon interpreting the cohomology as the group of flat line bundles on~$X\setminus S$. (The isomorphism is canonical up to the choice of base point~$x_0$.)

Matsumoto's Theorem asserts that $K_2(\C(X))$ is generated by the set of all Steinberg symbols $\symb fg$, with $f$, $g\in\C(X)^\times$. For a given Steinberg symbol~$\symb fg$, take $S\subset X$ to be large enough to contain all the poles and zeros of~$f$ and~$g$. Beilinson then defined a character $r_\eta(f,g)\in\Hom(\pi_1(X\setminus S,x_0),\Ctimes)$ by the pairing
\begin{equation}
\label{eq:monodromy-formula}
\pairing{r_\eta(f,g)}{[\gamma]}
=\exp\left\{\frac1{2\pi i}
\left(\int_\gamma\log f\,d\log g
-\log g(x_0)\int_\gamma d\log f\right)\right\}
\end{equation}
for $\gamma$ a smooth loop in~$(X\setminus S,x_0)$---neither the choice of branches of~$\log f$ and $\log g$, nor the choice of smooth representative of~$[\gamma]$, affect the quantity~\eqref{eq:monodromy-formula}. The character $r_\eta(f,g)$ is bi-multiplicative, skew-symmetric, and satisfies the Steinberg relation $\pairing{r_\eta(f,1-f)}{[\gamma]}=1$ for all loops in~$(X\setminus S,x_0)$ that also avoid the zeros of~$1-f$. Therefore $r_\eta(f,g)$ is determined by the Steinberg symbol $\symb fg$, and in this way the regulator~\eqref{eq:regulator-generic-point} is determined on the generic point of~$X$. Note that it is compatible with the limit~\eqref{eq:generic-point-stalk}, and so a family of homomorphisms~\eqref{eq:S-regulator} is thereby implicitly obtained: when $S$ contains all the poles and zeros of~$f$ and~$g$, the Steinberg symbol $\symb fg$ may be regarded as an element of $K_2(\O(X\setminus S))$, and $r_S\symb fg$ is again determined by formula~\eqref{eq:monodromy-formula}.

\begin{remark}
The articles of Bloch \cite{Bloch:1981} and Ramakrishnan \cite{Ramakrishnan:1981} prove these assertions by means of an alternative construction of $r_\eta$ in terms of the Heisenberg group, cf.~\S\ref{sec:heisenberg-group-construction}.
\end{remark}

\section{The Connes--Karoubi character for 2-summable Fredholm modules}
\label{sec:review-ck-character}

In this section, we review the definition of the Connes--Karoubi character in the special case of $2$-summable Fredholm modules \cite[5.3]{Connes-Karoubi:1988}. Much of the basic notation used subsequently is established in this section.

\subsection{2-Summable Fredholm modules}
\label{sec:2-summable-fredholm-modules}

Let $\hilb_+$  and $\hilb_-$ be (infinite-dimensional) separable Hilbert spaces. Let $P_+$, resp.\ $P_-$, denote the (orthogonal) projection of~$\hilb_+\oplus\hilb_-$ onto $\hilb_+$, resp.\ $\hilb_-$. Let $F=P_+-P_-$.

\begin{definition}[Connes--Karoubi {\cite[1.4]{Connes-Karoubi:1988}}]
A \emph{2-summable Fredholm module} is a complex algebra~$A$ together with a representation $\rho\colon A\to\bo(\hilb_+\oplus\hilb_-)$ such that the commutator $\addcommutator F{\rho(a)}$ is a Hilbert--Schmidt operator for every~$a\in A$. (The anti-diagonal components $P_\pm\rho(a)P_\mp$ are Hilbert--Schmidt operators.)

There is a \emph{universal 2-summable Fredholm module}
\[
\fm1=\{\,a\in\bo(\hilb_+\oplus\hilb_-)\colon\text{$\addcommutator Fa$ is a Hilbert--Schmidt operator}\,\}
\]
through which every 2-summable Fredholm module factors.
\end{definition}

The universal Fredholm module~$\fm1$ is a Banach algebra with norm $\norm{a}_{\fm1}=\norm{a}_\bo+\norm{\addcommutator Fa}_\hs$, where $\norm{\variable}_\bo$ is the operator norm and $\norm{\variable}_\hs$ is the Hilbert--Schmidt norm.

\subsection{The Connes--Karoubi character}
\label{sec:ck-character}

The Connes--Karoubi character for~$\fm1$ is a distinguished homomorphism
\begin{equation}
\label{eq:ck-character}
\ck\colon K_2(\fm1)\longto\Ctimes.
\end{equation}
It yields a character $K_2(A)\to\Ctimes$ for any 2-summable Fredholm module~$\rho\colon A\to\fm1$ via the composition
\[
\ck\comp K_2(\rho)\colon K_2(A)\longto\Ctimes.
\]

Before proceeding to the definition of~$\ck$, we recall some basic facts about $K_2$. Let $E$ be the group of elementary matrices, and let $\steinberg$ be the Steinberg group. Since the canonical projection $\steinberg(\fm1)\to E(\fm1)$ is a universal central extension whose kernel is, by definition, $K_2(\fm1)$, there is a canonical bijection between homomorphisms $K_2(\fm1)\to\Ctimes$ and (equivalence classes of) central extensions of~$E(\fm1)$ by~$\Ctimes$. Thus to define~$\ck\colon K_2(\fm1)\to\Ctimes$, it will suffice to define the corresponding central extension.

\subsubsection{The Connes--Karoubi central extension}

We now construct this central extension following~\cite[5.3--5]{Connes-Karoubi:1988}. To prevent notational clutter, we shall use the abbreviations $\bo$ for $\bo(\hilb_+)$, and $\tc$ for $\tc(\hilb_+)$, the ideal of trace class operators.

Let $\fmext$ be the fiber product $\fm1\times_{\bo/\tc}\bo$ of the homomorphism%
\footnote{This is a homomorphism---even though the map $a\mapsto P_+a P_+$ is not---because the product of Hilbert--Schmidt operators is of trace class.}
$\fm1\to\bo/\tc$, $a\mapsto P_+a P_+\bmod\tc$, and the projection $\bo\to\bo/\tc$:
\begin{equation}
\label{eq:basic-fiber-product}
\begin{CD}
\fmext @>{p_2}>> \bo \\
@VV{p_1}V @VVV \\
\fm1 @>>> \bo/\tc
\end{CD}
\end{equation}
The norm $\norm{(a,x)}_{\fmext}=\norm{a}_{\fm1}+\norm{P_+ a P_+-x}_{\tc}$ makes $\fmext$ a Banach algebra. We get an exact sequence of Banach algebras
\begin{equation}
\label{eq:main-exact-sequence}
0 \longto
\tc \longto
\fmext \xrightarrow{\,p_1\,}
\fm1 \longto
0
\quad
\text{(where $\tc\to\fmext\colon q\mapsto(0,q)$).}
\end{equation}

Let $\GL=\injlim_n\GL_n$. The key observation relating the exact sequence~\eqref{eq:main-exact-sequence} to $K_2$ is the following.

\begin{lemma}[Connes--Karoubi~{\cite[5.4]{Connes-Karoubi:1988}}]
The image of $\GL(p_1)$ is $E(\fm1)$.
\end{lemma}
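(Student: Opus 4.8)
The plan is to prove the two inclusions separately. The inclusion $E(\fm1)\subseteq$ image is elementary: it suffices to lift the generators. Given $a\in\fm1$, the pair $(a,P_+aP_+)$ lies in $\fmext$ because its two components agree \emph{exactly} modulo $\tc$; hence for $i\ne j$ the elementary matrix $e_{ij}(a,P_+aP_+)$ is an invertible element of the matrix ring over $\fmext$ (with inverse $e_{ij}(-(a,P_+aP_+))$), and $\GL(p_1)$ carries it to $e_{ij}(a)$. Since the $e_{ij}(a)$ generate $E(\fm1)$ and the image of a group homomorphism is a subgroup, the inclusion follows.

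The reverse inclusion is where the work lies. Fix $g\in\GL_n(\fmext)$ and write $g=(h,y)$ with $h\in\GL_n(\fm1)$, $y\in\GL_n(\bo(\hilb_+))$, and $P_+hP_+\equiv y\pmod{M_n(\tc)}$; I must show $h=p_1(g)\in E(\fm1)$. The crucial consequence of the existence of an \emph{honest} invertible lift $y$ is that $P_+hP_+$ is a trace-class, hence compact, perturbation of the invertible operator $y$, so $P_+hP_+$ is Fredholm of index zero. I would then run Gaussian elimination in the block decomposition of $h$ along $\hilb_+\oplus\hilb_-$, whose off-diagonal corners $P_+hP_-$, $P_-hP_+$ are Hilbert--Schmidt. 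First, because $h$ is surjective the range of $P_+hP_-$ covers $\operatorname{coker}(P_+hP_+)$, so one can right-multiply $h$ by a unipotent factor whose single off-diagonal entry is a finite-rank (thus Hilbert--Schmidt) operator $\hilb_+^n\to\hilb_-^n$, chosen to make the new corner surjective; an index-zero surjective Fredholm operator is invertible. This is exactly where index zero is used, and the factor stays in $E(\fm1)$ precisely because finite-rank corners are Hilbert--Schmidt.

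With $P_+hP_+$ now invertible, I would apply the Schur (LDU) factorization $h=L\,D\,U$, where $L$, $U$ are $\hilb_+\oplus\hilb_-$-unipotent with corners $P_-hP_+(P_+hP_+)^{-1}$ and $(P_+hP_+)^{-1}P_+hP_-$ (both Hilbert--Schmidt, being a Hilbert--Schmidt corner times a bounded operator), and $D=\operatorname{diag}(P_+hP_+,\,E)$ with $E$ the invertible Schur complement on $\hilb_-^n$. The unipotent factors lie in $E(\fm1)$ by the standard realization of a block-unipotent with Hilbert--Schmidt corner as a finite product of elementary matrices (no convergence issue arises, as corner--corner products vanish). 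For the diagonal factor I treat the two blocks \emph{separately}: via the corner embedding $\bo(\hilb_+)\hookrightarrow\fm1$, $b\mapsto b\oplus0$, together with the vanishing $K_1(\bo(\hilb_+))=0$ (the algebra $\bo(\hilb_+)$ is flasque, as it absorbs a countable direct sum of copies of itself, so every invertible matrix is elementary), the factor $\operatorname{diag}(P_+hP_+,1)$ lies in $E(\fm1)$, and symmetrically $\operatorname{diag}(1,E)\in E(\fm1)$ using $\bo(\hilb_-)$. Multiplying, $h\in E(\fm1)$.

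The main obstacle is this reverse inclusion, and within it the rigidity imposed by the Hilbert--Schmidt condition on the off-diagonal corners: one cannot apply Whitehead's lemma across the $\hilb_+\oplus\hilb_-$ grading, since that would require invertible rather than Hilbert--Schmidt corners, and a naive Eilenberg swindle inside $\fm1$ fails because infinite sums of Hilbert--Schmidt operators need not be Hilbert--Schmidt. The argument circumvents this in two places: it exploits the index-zero property, \emph{forced by the existence of the lift $y$}, to clear the corner using only finite-rank off-diagonal operations, and it reduces each diagonal block through the flasqueness of $\bo(\hilb_\pm)$ rather than ever combining the two blocks.
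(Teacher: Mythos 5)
Your proposal cannot be matched against a proof in the paper, because the paper contains none: this lemma is imported wholesale from Connes--Karoubi \cite[5.4]{Connes-Karoubi:1988}, and the only trace of its content in the text is the easy inclusion, which reappears implicitly in formula~\eqref{eq:ck-character-formula-lift} via exactly the lift $a\mapsto(a,P_+aP_+)$ that you use. So what you have written is a self-contained substitute for the citation, and it is correct. Its organization is also the natural one: the existence of an invertible second component $y$ with $P_+hP_+-y\in M_n(\tc)$ is equivalent to $P_+hP_+$ being Fredholm of index zero (one direction is your trace-class-perturbation remark; the other holds because an index-zero Fredholm operator is an invertible plus a finite-rank, hence trace-class, operator). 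Your argument therefore identifies the image of $\GL(p_1)$ with the kernel of the index homomorphism on $\GL(\fm1)$ and shows that this kernel equals $E(\fm1)$; this is precisely the computation $K_1(\fm1)\isom\Z$ that is the real content of Connes--Karoubi's lemma. The three inputs---stability of the Fredholm index under trace-class perturbation, elementarity of square-zero block-unipotents, and $K_1(\bo(\hilb_\pm))=0$ by the Eilenberg swindle---are all true and are deployed where they are needed.

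Two steps deserve to be spelled out, though neither is a genuine gap. First, in the corner-clearing step, writing $A=P_+hP_+$ and $B=P_+hP_-$, surjectivity of $A+BR$ does not follow from an arbitrary finite-rank $R$ for which $\operatorname{ran}(BR)$ covers a complement of $\operatorname{ran}(A)$; you should take $R$ supported on $\ker A$ (say $Ru_i=v_i$ for an orthonormal basis $(u_i)$ of $\ker A$, $R=0$ on $(\ker A)^\perp$, with the $Bv_i$ spanning a complement of $\operatorname{ran}(A)$), so that $A+BR$ agrees with $A$ on $(\ker A)^\perp$ and maps $\ker A$ onto that complement; surjectivity plus index zero then gives invertibility. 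Second, the corner embedding $b\mapsto b\oplus0$ is not unital and so does not act on $\GL$; what transports stable elementarity is its unitalization $g\mapsto g\oplus1_{\hilb_-^n}$, a unital homomorphism $M_m(\bo(\hilb_+))\to M_m(\fm1)$ carrying $e_{ij}(b)$ to $e_{ij}(b\oplus0)$. Finally, your block-unipotent claim is cleanest via the identity
\begin{equation*}
\begin{pmatrix}1+ab&0\\0&1\end{pmatrix}
=e_{12}(a)\,e_{21}(b)\,e_{12}(-a)\,e_{21}(-b),
\qquad\text{valid whenever }ba=0,
\end{equation*}
applied over the ring $M_n(\fm1)$ with $a$ the Hilbert--Schmidt corner and $b=P_+$ (or $P_-$); this replaces the appeal to a ``standard realization'' by four explicit elementary factors.
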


Thus, applying the functor~$\GL$ to the sequence~\eqref{eq:main-exact-sequence}, we get the exact sequence
\begin{equation}
\label{eq:pre-ck-extension}
1\longto\detop
\longto\GL(\fmext)
\xrightarrow{\,\GL(p_1)\,}E(\fm1)
\longto1,
\end{equation}
where
\begin{equation}
\label{eq:operators-with-determinant}
\detop
=\ker\GL(p_1)
\isom\GL(p_2)(\detop)
=\ker\bigl(\GL(\bo)\to\GL(\bo/\tc)\bigr).
\end{equation}
The latter is the group of invertible operators with (Fredholm) determinant.\footnote{%
What we are calling here the ``Fredholm determinant'' is the extension to~$\ker\bigl(\GL(\bo)\to\GL(\bo/\tc)\bigr)$---thus to~$\detop$---of the usual Fredholm determinant for operators in  $\ker\bigl(\GL_1(\bo)\to\GL_1(\bo/\tc)\bigr)$.}

Let $\detopone=\ker(\det\colon\detop\to\Ctimes)$. This is a normal subgroup of~$\GL(\fmext)$, and the quotient $\GL(\fmext)/\detopone$ contains $\detop/\detopone$ in its center. The determinant furnishes an isomorphism $\detop/\detopone\isom\Ctimes$ of topological groups. Thus we get a central extension of~$E(\fm1)$ by~$\Ctimes$ upon dividing the extension~\eqref{eq:pre-ck-extension} by~$\detopone$.

\begin{definition}[Connes--Karoubi central extension {\cite[5.5]{Connes-Karoubi:1988}}]
The \emph{Connes--Karoubi character} is the homomorphism
\[
\ck\colon K_2(\fm1)\longto\Ctimes
\]
determined by the central extension
\begin{equation}
\label{eq:ck-central-extension}
1
\longto \Ctimes\isom\detop/\detopone
\longto \GL(\fmext)/\detopone
\xrightarrow{\,\GL(p_1)\,} E(\fm1)
\longto 1.
\end{equation}  
\end{definition}

\subsubsection{Formula for the Connes--Karoubi character}

More explicitly, the Connes--Karoubi character~$\ck$ is induced by the unique morphism~$\tilde\ck$ from the universal central extension of~$E(\fm1)$ to the central extension~\eqref{eq:ck-central-extension}:
\begin{equation}
\label{eq:ck-central-extension-homomorphism}
\begin{CD}
1 @>>> K_2(\fm1) @>>>\steinberg(\fm1) @>{\pi}>> E(\fm1) @>>> 1 \\
@. @VV{\tau}V @VV{\tilde\tau}V @| @. \\
1 @>>> \Ctimes @>>> \GL(\fmext)/\detopone @>{\GL(p_1)}>> E(\fm1) @>>> 1
\end{CD}
\end{equation}

With the aim of giving a formula for $\tilde\ck$, and therefore of~$\ck$, we recall some standard notation for the Steinberg group, cf.~\cite[\S4.2]{Rosenberg:1994}. Let $A$ be a ring. Let $x_{ij}(a)$ ($a\in A$, $i\neq j$), be the standard generators of~$\steinberg(A)$, and let $e_{ij}(a)$ be the corresponding elementary matrices. For $a$, $b\in\GL_1(A)$, let
\[
d_{12}(a)=
\begin{pmatrix}
  a & 0 & 0\\
  0 & a^{-1} & 0\\
  0 & 0 & 1
\end{pmatrix},
\quad
d_{13}(b)=
\begin{pmatrix}
  b & 0 & 0\\
  0 & 1 & 0\\
  0 & 0 & b^{-1}
\end{pmatrix}.
\]
Assume that~$a$ and~$b$ commute. Then $d_{12}(a)$ and $d_{13}(b)$ are commuting elements of~$E(A)$. Letting $\pi$ be the projection $\steinberg(A)\to E(A)$, the \emph{Steinberg symbol} of~$a$ and~$b$ is the (multiplicative) commutator
\begin{equation}
\label{eq:steinberg-symbol}
\symb ab=\multcommutator{\pi^{-1}\{d_{12}(a)\}}{\pi^{-1}\{d_{13}(b)\}}\in K_2(A).
\end{equation}
It is well-defined because the kernel of~$\pi$ is central.

\begin{proposition}[Formula for~$\ck$]
\label{prop:ck-formula}
In terms of the above notation, we have
\begin{align}
\label{eq:ck-character-formula-lift}
\tilde\ck(x_{ij}(a))
  &=e_{ij}((a,P_+a P_+))\detopone\in\GL(\fmext)/\detopone
  &&(a\in\fm1,\ i\neq j)\\
\label{eq:ck-character-formula}
\ck(u)
  &=\det\comp\GL(p_2)(\tilde\ck(u))
  &&(u\in K_2(\fm1))\\
\label{eq:ck-character-formula-steinberg-symbols}
\ck\symb ab
  &=\det\comp\GL(p_2)\bigl(
      \multcommutator{\GL(p_1)^{-1}\{d_{12}(a)\}}{\GL(p_1)^{-1}\{d_{13}(b)\}}\bigr)
  &&(a,\ b\in\GL_1(\fm1))
\end{align}
\end{proposition}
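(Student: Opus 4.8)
The plan is to prove formula~\eqref{eq:ck-character-formula-lift} first, since it carries the real content, and then to obtain \eqref{eq:ck-character-formula} and~\eqref{eq:ck-character-formula-steinberg-symbols} as formal consequences. For the first formula I would exploit the universal property of the Steinberg group: since $\steinberg(\fm1)$ is the universal central extension of the perfect group $E(\fm1)$, the morphism $\tilde\ck$ from~\eqref{eq:ck-central-extension-homomorphism} is the \emph{unique} lift of the identity on $E(\fm1)$ to the central extension~\eqref{eq:ck-central-extension}. Hence it suffices to check that the assignment
\[
\phi\colon x_{ij}(a)\longmapsto e_{ij}\bigl((a,P_+aP_+)\bigr)\detopone
\qquad(a\in\fm1,\ i\neq j)
\]
extends to a group homomorphism $\steinberg(\fm1)\to\GL(\fmext)/\detopone$ covering $\pi$, and then to invoke uniqueness to conclude $\phi=\tilde\ck$. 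The covering property is immediate: the first projection sends $(a,P_+aP_+)$ to $a$, so $\GL(p_1)\comp\phi(x_{ij}(a))=e_{ij}(a)=\pi(x_{ij}(a))$.

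The substance is verifying that $\phi$ respects the Steinberg relations. Write $s(a)=(a,P_+aP_+)\in\fmext$ for the section appearing above; both of its coordinates are additive, so $s(a+b)=s(a)+s(b)$. Consequently the relation $x_{ij}(a)x_{ij}(b)=x_{ij}(a+b)$ and the commuting relation $\multcommutator{x_{ij}(a)}{x_{kl}(b)}=1$ (for $j\neq k$, $i\neq l$) hold \emph{exactly} in $E(\fmext)$, hence a fortiori modulo $\detopone$. The only delicate relation is $\multcommutator{x_{ij}(a)}{x_{jk}(b)}=x_{ik}(ab)$ (with $i,j,k$ distinct), whose image involves $e_{ik}(s(a)s(b))$. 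Here the failure of $s$ to be multiplicative enters: a direct computation in the fiber product gives $s(a)s(b)=s(ab)-(0,q)$, where $q=P_+aP_-bP_+$ is of trace class because $P_+aP_-$ and $P_-bP_+$ are Hilbert--Schmidt by the Fredholm-module condition. Thus $e_{ik}(s(a)s(b))=e_{ik}(s(ab))\,e_{ik}((0,-q))$, and it remains to show the defect $e_{ik}((0,-q))$ lies in $\detopone$. This is the crux of the whole proof: the defect lies in $\ker\GL(p_1)=\detop$ (its first coordinate is $e_{ik}(0)=1$), and under the isomorphism $\GL(p_2)$ it becomes the elementary matrix $e_{ik}(-q)$ over $\bo$, which is unipotent since $i\neq k$ and therefore has Fredholm determinant~$1$; hence $e_{ik}((0,-q))\in\ker(\det)=\detopone$. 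With all three relations verified, $\phi$ is a well-defined homomorphism, and uniqueness yields~\eqref{eq:ck-character-formula-lift}.

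Formula~\eqref{eq:ck-character-formula} is then obtained by unwinding the definition of~$\ck$. For $u\in K_2(\fm1)=\ker\pi$, commutativity of the right-hand square in~\eqref{eq:ck-central-extension-homomorphism} gives $\GL(p_1)(\tilde\ck(u))=\pi(u)=1$, so any representative of $\tilde\ck(u)$ in $\GL(\fmext)$ lies in $\detop$; by construction $\ck(u)$ is the image of $\tilde\ck(u)$ under the identification $\detop/\detopone\isom\Ctimes$, which is precisely $\det\comp\GL(p_2)$ (well-defined on $\detop$ modulo $\detopone=\ker\det$). For the Steinberg-symbol formula~\eqref{eq:ck-character-formula-steinberg-symbols}, I would apply~\eqref{eq:ck-character-formula} to $u=\symb ab=\multcommutator{\pi^{-1}\{d_{12}(a)\}}{\pi^{-1}\{d_{13}(b)\}}$ and use that $\tilde\ck$ is a homomorphism, so $\tilde\ck(\symb ab)=\multcommutator{\tilde\ck\{\pi^{-1}d_{12}(a)\}}{\tilde\ck\{\pi^{-1}d_{13}(b)\}}$. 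Since $\GL(p_1)\comp\tilde\ck=\pi$, the two factors lie in $\GL(p_1)^{-1}\{d_{12}(a)\}$ and $\GL(p_1)^{-1}\{d_{13}(b)\}$ respectively, and the commutator is independent of the chosen lifts because $\detop/\detopone$ is central in $\GL(\fmext)/\detopone$ and $\det\comp\GL(p_2)$ factors through $\detop/\detopone$. Applying $\det\comp\GL(p_2)$ then gives~\eqref{eq:ck-character-formula-steinberg-symbols}.

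I expect the only genuine obstacle to be the computation in the second paragraph: identifying the multiplicative defect $s(a)s(b)-s(ab)$ as a trace-class element and confirming that the corresponding elementary matrix has trivial Fredholm determinant, so that it is absorbed into $\detopone$. This is the single point where the Hilbert--Schmidt structure of $\fm1$ and the determinant theory interact; everything else is either an exact identity (the additive and commuting relations) or formal bookkeeping around the diagram~\eqref{eq:ck-central-extension-homomorphism} once~\eqref{eq:ck-character-formula-lift} is in hand.
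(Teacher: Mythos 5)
Your proposal is correct, but your route to \eqref{eq:ck-character-formula-lift} --- the heart of the proposition --- is genuinely different from the paper's. The paper takes the existence of $\tilde\ck$ for granted (it is the canonical morphism out of the universal central extension) and merely \emph{computes} its value on generators, using the identity $x_{ij}(a)=\multcommutator{x_{ik}(1)}{x_{kj}(a)}$: since any two preimages under $\GL(p_1)$ of a fixed element of $E(\fm1)$ differ by a central element of $\GL(\fmext)/\detopone$, the value of $\tilde\ck$ on a commutator equals the commutator of \emph{arbitrary} lifts, and the resulting commutator $\multcommutator{e_{ik}((1,1))}{e_{kj}((a,P_+aP_+))}=e_{ij}((a,P_+aP_+))$ is an \emph{exact} identity in $\GL(\fmext)$, because one of the two entries is the multiplicative unit $(1,1)$; no trace-class correction ever appears. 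You instead build the lift from scratch: define $\phi$ on generators, verify the Steinberg relations, and invoke uniqueness of morphisms out of the universal central extension. This forces you to confront the failure of multiplicativity of the section $a\mapsto(a,P_+aP_+)$, and your handling of it is sound: the defect $q=P_+aP_-bP_+$ is trace class precisely by $2$-summability, and $e_{ik}((0,-q))$ lies in $\detop$ with Fredholm determinant $1$ (it is unipotent with traceless, trace-class perturbation), hence in $\detopone$. What the paper's commutator trick buys is brevity --- the analytic input is rendered invisible (it hides in the footnote asserting that $a\mapsto P_+aP_+\bmod\tc$ is a homomorphism). What your argument buys is transparency: it exhibits exactly where the Hilbert--Schmidt hypothesis and the determinant interact to make the central extension exist at all, which is the real mechanism behind the Connes--Karoubi construction. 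Your derivations of \eqref{eq:ck-character-formula} and \eqref{eq:ck-character-formula-steinberg-symbols} from \eqref{eq:ck-character-formula-lift} coincide in substance with the paper's.
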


\begin{proof}
  Formula~\eqref{eq:ck-character-formula} follows from the combination of \eqref{eq:ck-central-extension}, \eqref{eq:ck-central-extension-homomorphism}, and~\eqref{eq:operators-with-determinant}.

To get formula~\eqref{eq:ck-character-formula-lift}, first note that for $x\in \steinberg(\fm1)$, every element of the set $\GL(p_1)^{-1}\{\pi(x)\}$ equals $\tilde\ck(x)$ up to multiplication by a central element of $\GL(\fmext)/\detopone$. Thus for $x$, $y\in\steinberg(\fm1)$, we have
\begin{equation}
\label{eq:ck-character-formula-multcommutator}
\tilde\ck(\multcommutator xy)
=\multcommutator{\GL(p_1)^{-1}\{\pi(x)\}}{\GL(p_1)^{-1}\{\pi(y)\}}.
\end{equation}
The generator~$x_{ij}(a)\in\steinberg(\fm1)$ satisfies the relation $x_{ij}(a)=\multcommutator{x_{ik}(1)}{x_{kj}(a)}$, for any $k\neq i$,~$j$ (see \cite[4.2.1(c)]{Rosenberg:1994}). Formula~\eqref{eq:ck-character-formula-lift} is therefore a consequence of formula~\eqref{eq:ck-character-formula-multcommutator}:
\[
\tilde\ck(x_{ij}(a))
=\multcommutator{e_{ik}((1,1))\detopone}{e_{kj}((a,P_+a P_+))\detopone}
=e_{ij}((a,P_+a P_+))\detopone
\]
as desired.

Applying formula~\eqref{eq:ck-character-formula-multcommutator} to the commutator~\eqref{eq:steinberg-symbol}, we get
\[
\tilde\ck\symb ab
  =\multcommutator{\GL(p_1)^{-1}\{d_{12}(a)\}}{\GL(p_1)^{-1}\{d_{13}(b)\}}.
\]
Plugging this into formula~\eqref{eq:ck-character-formula} yields formula~\eqref{eq:ck-character-formula-steinberg-symbols}.
\end{proof}

\section{Fredholm modules of loop operators}
\label{sec:loop-operators}

Let $\hilb_+\subset L^2(S^1)$ be the Hilbert subspace generated by $e^{in\theta}$, $n\ge0$, and let $\hilb_-$ be its orthogonal complement. Given a continuous function $q\colon S^1\to\C$, let $M(q)$ denote the multiplication operator on~$L^2(S^1)$: $M(q)\xi=q\xi$.

The following result of Pressley, Segal, and Wilson is the central observation connecting Fredholm modules to the Beilinson--Bloch regulator.

\begin{proposition}[Pressley--Segal--Wilson {\cite[2.3]{Segal-Wilson:1985}}, {\cite[6.3.1]{Pressley-Segal:1986}}]
\label{prop:loop-operator-fredholm-module}
Let $Y$ be a complex analytic manifold and let $\gamma\colon S^1\to Y$ be a smooth loop. Then with respect to the grading $L^2(S^1)=\hilb_+\oplus\hilb_-$, we have $\mult{f\circ\gamma}\in\fm1$ whenever $f\in\O(Y)$. Thus the $\C$-algebra homomorphism
\begin{equation}
\label{eq:fredholm-module-structure-on-meromorphic-functions}
\mu_\gamma\colon\O(Y)\longto\fm1,\quad
f\mapsto\mult{f\circ\gamma}
\end{equation}
endows $\O(Y)$ with the structure of a 2-summable Fredholm module.
\end{proposition}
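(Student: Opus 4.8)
The plan is to reduce the assertion to a single Fourier-analytic estimate. Writing $g=f\comp\gamma$, the map $f\mapsto\mult{f\comp\gamma}$ is visibly a $\C$-algebra homomorphism $\O(Y)\to\bo(L^2(S^1))$, since $f\mapsto f\comp\gamma$ is a homomorphism $\O(Y)\to C(S^1)$ and $q\mapsto\mult q$ is a homomorphism $C(S^1)\to\bo(L^2(S^1))$. As $\fm1$ is an algebra, the only thing to prove is that each operator $\mult g$ lies in $\fm1$, i.e.\ that $\addcommutator F{\mult g}$ is Hilbert--Schmidt; boundedness is automatic, $\norm{\mult g}_\bo\le\norm g_\infty$ since $g$ is continuous. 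The one structural observation I would exploit is that, because $f$ is analytic (in particular $C^\infty$ as a real map) and $\gamma$ is a smooth loop, the composite $g=f\comp\gamma$ is a $C^\infty$ function on~$S^1$; consequently its Fourier coefficients $\ft g(k)$ decay faster than any power of $1/|k|$. Note that holomorphicity of~$f$ enters here only through this smoothness.

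Next I would compute the commutator in the orthonormal Fourier basis $e_n(\theta)=(2\pi)^{-1/2}e^{in\theta}$ ($n\in\Z$). With respect to this basis $F$ is diagonal, $Fe_n=\lambda_n e_n$ with $\lambda_n=+1$ for $n\ge0$ and $\lambda_n=-1$ for $n<0$, while $\mult g$ has the Toeplitz-type matrix $\pairing{\mult g\,e_m}{e_n}=\ft g(n-m)$. Hence the matrix entries of the commutator are
\[
\pairing{(\addcommutator F{\mult g})e_m}{e_n}=(\lambda_n-\lambda_m)\,\ft g(n-m),
\]
which vanish unless $n$ and $m$ lie on opposite sides of~$0$, in which case $\lambda_n-\lambda_m=\pm2$. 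By Parseval the Hilbert--Schmidt norm is therefore
\[
\hsnorm{\addcommutator F{\mult g}}^2
=4\sum_{n\ge0>m}|\ft g(n-m)|^2+4\sum_{m\ge0>n}|\ft g(n-m)|^2.
\]

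Finally I would carry out the lattice-point count. For fixed $k\ge1$ there are exactly $k$ pairs $(n,m)$ with $n\ge0>m$ and $n-m=k$ (namely $-k\le m\le-1$), and symmetrically $|k|$ pairs in the second region for each $k\le-1$; hence
\[
\hsnorm{\addcommutator F{\mult g}}^2=4\sum_{k\neq0}|k|\,|\ft g(k)|^2.
\]
This is a constant multiple of the square of the Sobolev $H^{1/2}(S^1)$-norm of~$g$; equivalently, the two off-diagonal blocks $P_+\mult g P_-$ and $P_-\mult g P_+$ are the Hankel operators of~$g$, and the displayed sum is the classical criterion for their Hilbert--Schmidt property. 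Since $g$ is smooth, the rapid decay of $\ft g(k)$ makes this series converge at once, the weight $|k|$ being harmless against decay faster than any power; thus $\addcommutator F{\mult g}\in\hs$ and $\mult g\in\fm1$, as required. The only genuine work is the bookkeeping in the lattice-point count and the identification of the resulting weighted sum with the $H^{1/2}$-norm; once that is in place the smoothness of $f\comp\gamma$ closes the argument immediately, and in fact shows that $\addcommutator F{\mult g}$ has rapidly decaying singular values, not merely square-summable ones.
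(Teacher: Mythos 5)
Your proof is correct, but it runs along a genuinely different (Plancherel-dual) track from the paper's. The paper represents $F$ as a singular integral operator with kernel $1+i\cot\frac12(\theta-\phi)$, so that $\addcommutator{\mult{g}}F$ becomes an integral operator whose kernel involves the difference quotient $\bigl(g(\theta)-g(\phi)\bigr)/\sin\frac12(\theta-\phi)$; smoothness (indeed just $C^1$) of $g=f\comp\gamma$ makes this kernel continuous on the compact square $S^1\times S^1$, hence square-integrable, hence the commutator is Hilbert--Schmidt. You instead diagonalize $F$ in the Fourier basis, compute the matrix entries $(\lambda_n-\lambda_m)\ft g(n-m)$ of the commutator, and carry out the lattice-point count to arrive at
\[
\hsnorm{\addcommutator F{\mult g}}^2=4\sum_{k\neq0}|k|\,|\ft g(k)|^2,
\]
which is the classical $H^{1/2}$-criterion for the Hankel blocks $P_\pm\mult gP_\mp$ to be Hilbert--Schmidt; your count and the identification are correct, and the conclusion follows from rapid Fourier decay. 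The two computations evaluate the same quantity---your weighted sum is exactly the paper's double integral by Plancherel---but each buys something different: the paper's position-space argument exposes immediately that $C^1$ regularity of the loop suffices (the point the authors say they wish to clarify), whereas your frequency-space argument isolates the sharp sufficient condition $g\in H^{1/2}(S^1)$ and, as you note, shows more, namely that the off-diagonal blocks have rapidly decaying singular values, so the Fredholm module is $p$-summable for every $p>0$, not merely $2$-summable. One small caution about your framing: if one only assumed $C^1$ rather than $C^\infty$, your closing appeal to ``decay faster than any power'' would not apply as stated, and one would instead note that $\ft g(k)=O(1/|k|)$ alone is not enough, but $g\in H^1\subset H^{1/2}$ still closes the argument; this is worth keeping in mind since the paper emphasizes that continuous differentiability suffices.
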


\begin{proof}
We shall repeat the proof in \cite[Prop.~6.3.1]{Pressley-Segal:1986}, for it clarifies why $\gamma$ is required to be smooth. (In fact, continuous differentiability would suffice.)

The operator $F=P_+-P_-$ is the singular integral operator
\[
(F\xi)(\theta)=\pv\int_0^{2\pi}K(\theta,\phi)\xi(\phi)\,\frac{d\phi}{2\pi}
\]
where ``$\pv\int$'' denotes the principal value integral $\lim_{\epsilon\downarrow0}\int_0^{\theta-\epsilon}+\int_{\theta+\epsilon}^{2\pi}$, and the operator kernel~$K$ equals
\begin{equation}
\label{eq:kernel-F}
K(\theta,\phi)
=\sum_{k\geq0}e^{ik(\theta-\phi)}-\sum_{k<0}e^{ik(\theta-\phi)}
=1+i\cot\textstyle\frac12(\theta-\phi).
\end{equation}
The commutator $\addcommutator{\mult{f\circ\gamma}}F$ is therefore an integral operator whose (squared) Hilbert--Schmidt norm is
\[
\hsnorm{\addcommutator{\mult{f\circ\gamma}}F}^2
=\frac1{4\pi^2}\int_0^{2\pi}\!\!\!\int_0^{2\pi}
 \frac{|f\circ\gamma(\theta)-f\circ\gamma(\phi)|^2}
 {\sin^2\textstyle\frac12(\theta-\phi)}\,d\phi\,d\theta.
\]
Since $f\circ\gamma$ is smooth, the integrand is a continuous function on $S^1\times S^1$. Thus $\hsnorm{\addcommutator{\mult{f\circ\gamma}}F}$ is finite, and so $\addcommutator{\mult{f\circ\gamma}}F$ is Hilbert--Schmidt.
\end{proof}

\section{Reconstruction of the Beilinson--Bloch regulator}
\label{sec:reconstruction-bb-regulator}

By virtue of Proposition~\ref{prop:loop-operator-fredholm-module}, the Connes--Karoubi character~$\ck$ provides the map
\[
R_S\colon K_2(\O(X\setminus S))\times C^\infty(S^1,X\setminus S)\longto\Ctimes,
\quad
(u,\gamma)\mapsto\ck\comp K_2(\mu_\gamma)(u).
\]
The goal of this section---indeed, of the remainder of the paper---is to prove that $R_S$ coincides with the Beilinson--Bloch regulator (after passage to homotopy classes of loops). We establish this first for Steinberg symbols (\S\ref{sec:R_S-steinberg-symbols}), then for the whole group $K_2(\O(X\setminus S))$, by an argument exploiting the functorial properties of~$R_S$ (\S\ref{sec:functorial-properties}) and the continuity of~$R_S(u,\variable)$ (\S\ref{sec:continuity}).

\subsection{Computation of \texorpdfstring{$R_S$}{R\_S} on Steinberg symbols}
\label{sec:R_S-steinberg-symbols}

\begin{proposition}
\label{prop:bb-regulator-agreement-on-steinberg-symbols}
The map $R_S$ agrees with the Beilinson--Bloch regulator on Steinberg symbols: if $f$, $g\in\O(X\setminus S)^\times$ and $\gamma\colon S^1\to X\setminus S$ is a smooth loop, then
\[
\pairing{r_S\symb fg}{[\gamma]}=\ck\comp K_2(\mu_\gamma)(\symb fg).
\]
\end{proposition}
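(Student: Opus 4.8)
The plan is to reduce the identity to an explicit evaluation of the Connes--Karoubi character on the Steinberg symbol of two commuting invertible multiplication operators on $L^2(S^1)$, and then to compute the resulting Fredholm determinant by means of the Helton--Howe determinant theory. First I would invoke functoriality of the Steinberg symbol together with Proposition~\ref{prop:loop-operator-fredholm-module}. Writing $u=f\circ\gamma$ and $v=g\circ\gamma$, the operators $a=\mult u$ and $b=\mult v$ are commuting elements of $\fm1$, and they are invertible in $\GL_1(\fm1)$ because $f$ and $g$ are units, so $u$ and $v$ are nowhere vanishing. Since $K_2(\mu_\gamma)(\symb fg)=\symb ab$ in $K_2(\fm1)$, we have
\[
\ck\comp K_2(\mu_\gamma)(\symb fg)=\ck\symb ab,
\]
so everything reduces to computing $\ck\symb ab$ in terms of the smooth symbols $u$ and $v$; this is the key computation announced in the introduction.

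Next I would apply formula~\eqref{eq:ck-character-formula-steinberg-symbols} of Proposition~\ref{prop:ck-formula}. Recall that the $p_2$-component of any $\GL(p_1)$-lift of $\mult u$ is an invertible operator congruent modulo $\tc$ to the compression $P_+\mult uP_+=\toeplitz u$. I would choose lifts of $d_{12}(a)$ and $d_{13}(b)$ whose $p_2$-components are invertible, trace-class perturbations of the block-diagonal Toeplitz matrices $\mathrm{diag}(\toeplitz u,\toeplitz{u^{-1}},1)$ and $\mathrm{diag}(\toeplitz v,1,\toeplitz{v^{-1}})$; such invertible perturbations exist because each block has total Fredholm index zero. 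Because $u$ and $v$ commute as functions, $d_{12}(a)$ and $d_{13}(b)$ commute in $E(\fm1)$, so the multiplicative commutator of the lifts is killed by $\GL(p_1)$ and lies in $\detop$. Applying $\det\comp\GL(p_2)$ then exhibits $\ck\symb ab$ as the Fredholm determinant of the multiplicative commutator of the chosen Toeplitz lifts. As $\toeplitz u\toeplitz v-\toeplitz{uv}$ is a product of Hankel operators, hence trace class for smooth symbols, this commutator is a trace-class perturbation of the identity, and its determinant is well defined and independent of the auxiliary choices (the independence being guaranteed a priori by the well-definedness of~$\ck$).

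It remains to evaluate this determinant, and here I would use the Helton--Howe theory~\cite{Helton-Howe:1973}. The trace of the additive commutator of the Toeplitz operators with symbols $\log u$ and $\log v$ is the line integral $\frac1{2\pi i}\int_{S^1}\log u\,d\log v$; feeding this into the identity $\log\det=\tr\log$ for the multiplicative commutator yields, in the case of winding number zero,
\[
\det\bigl(\toeplitz u\toeplitz v\toeplitz u^{-1}\toeplitz v^{-1}\bigr)
=\exp\Bigl(\tfrac1{2\pi i}\int_{S^1}\log u\,d\log v\Bigr),
\]
which matches the first term of Beilinson's monodromy formula~\eqref{eq:monodromy-formula}.

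I expect the main obstacle to be the bookkeeping of winding numbers and the extraction of the base-point term $-\log g(x_0)\int_\gamma d\log f$. When $u$ or $v$ has nonzero winding number the Toeplitz operators fail to be invertible and $\log u$ has no single-valued branch on $S^1$, so the passage through the stabilizing matrices $d_{12}$, $d_{13}$ and the finite-rank corrections in the lifts must be tracked with care. I would split the determinant into a smooth contribution, evaluated by Helton--Howe as above, and an index contribution governed by the Toeplitz index formula, and then verify that their combination reproduces precisely the exponent in~\eqref{eq:monodromy-formula}, base-point correction included, consistently with the ambiguities in the branches of $\log f$ and $\log g$.
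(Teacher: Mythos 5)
Your setup is sound and follows the same route as the paper: reduce, via functoriality, to evaluating $\ck\symb{\mult{f\circ\gamma}}{\mult{g\circ\gamma}}$ on the circle, express it through formula~\eqref{eq:ck-character-formula-steinberg-symbols} as a Fredholm determinant of a multiplicative commutator of Toeplitz lifts, and evaluate by Helton--Howe. Your winding-number-zero computation is essentially the paper's Case~(iii), equation~\eqref{eq:R-steinberg-symbol-formula-case-three}, and is correct (one small caution: the identity ``$\log\det=\tr\log$'' you invoke is really $\det(e^Ae^Be^{-A}e^{-B})=\exp\tr(AB-BA)$ applied to $A=\toeplitz{\log u}$, $B=\toeplitz{\log v}$, together with the fact that the commutator determinant is unchanged when $e^{\toeplitz{\log u}}$ is replaced by the trace-class perturbation $\toeplitz{u}$; as stated, your formula conflates these two operators).

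The genuine gap is that everything with nonzero winding number --- which is exactly where the sign $(-1)^{mn}$, the terms $n\ft{\alpha}(0)-m\ft{\beta}(0)$, and the base-point correction $-\log g(x_0)\int_\gamma d\log f$ of~\eqref{eq:monodromy-formula} live --- is deferred to an unproved plan (``split the determinant into a smooth contribution and an index contribution and verify''). This is not a routine verification: when $u$ has nonzero winding number, $\toeplitz{u}$ is not invertible, the lifts require finite-rank corrections, and the entire subtle content of the proposition is concentrated in how those corrections feed into the determinant; carried out directly, this is essentially the ``long and delicate computation of signs'' of Carey--Pincus that the paper deliberately avoids. The missing idea is algebraic, not analytic: writing $p=z^me^\alpha$, $q=z^ne^\beta$ and using bi-multiplicativity and skew-symmetry of the Steinberg symbol \emph{inside} $K_2(C^\infty(S^1))$, before applying $\ck$, to get
\[
R\symb{p}{q}
=R\symb{z}{z}^{mn}\,R\symb{z}{e^\beta}^{m}\,R\symb{z}{e^\alpha}^{-n}\,R\symb{e^\alpha}{e^\beta}.
\]
This isolates the winding-number data in two concrete evaluations, $R\symb{z}{z}=-1$ (done in the paper with explicit shift-operator matrices, \eqref{eq:R-steinberg-symbol-formula-case-one}) and $R\symb{z}{e^\alpha}=\exp(-\ft{\alpha}(0))$ (done by splitting $\alpha=\alpha_++\alpha_-$ and applying Grothendieck's determinant expansion to rank-one operators, \eqref{eq:R-steinberg-symbol-formula-case-two}), after which the match with Beilinson's formula is a Fourier-series identity (Lemma~\ref{lemma:bb-regulator-fourier-expansion}), with the base-point term accounting for $-m\ft{\beta}(0)$ and part of the sign. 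Without this decomposition, or some equivalent device, your proposal does not yet constitute a proof of the proposition.
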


Let $\mu\colon C^\infty(S^1)\to\fm1$ be the $2$-summable Fredholm module of multiplication operators on~$L^2(S^1)$ \cite[5.7]{Connes-Karoubi:1988}: $\mu(q)=\mult q$. Let
\[
R=\ck\comp K_2(\mu)\colon K_2(C^\infty(S^1))\longto\Ctimes.
\]
(This is the character $\ck_{S^1}$ of~\S\ref{sec:related-works}.) Then
\[
R_S(\symb fg,\gamma)=R\symb{f\comp\gamma}{g\comp\gamma}.
\]
In light of Beilinson's formula~\eqref{eq:monodromy-formula}, Proposition~\ref{prop:bb-regulator-agreement-on-steinberg-symbols} is proved once the following proposition is established.

\begin{proposition}
\label{prop:R-integral-formula}
Let $p$, $q\in C^\infty(S^1)$ be nowhere vanishing functions. We have
\begin{equation}
\label{eq:R-steinberg-symbol}
R\symb pq
=\exp\left\{\frac1{2\pi i}\left(
  \int_{S^1}\log p\,d\log q-\log q(1)\int_{S^1}d\log p\right)\right\}.
\end{equation}
\end{proposition}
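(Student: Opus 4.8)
The plan is to run the Steinberg symbol through the explicit formula~\eqref{eq:ck-character-formula-steinberg-symbols} and reduce the resulting Fredholm determinant to a trace accessible to the Helton--Howe theory. Since $\mu(p)=\mult p$, we have $R\symb pq=\ck\symb{\mult p}{\mult q}$, so by~\eqref{eq:ck-character-formula-steinberg-symbols} the quantity to evaluate is the Fredholm determinant of $\GL(p_2)$ applied to the multiplicative commutator of lifts of $d_{12}(\mult p)$ and $d_{13}(\mult q)$. I would make these lifts concrete using formula~\eqref{eq:ck-character-formula-lift}: writing each diagonal matrix as a product of elementary matrices through the Whitehead decomposition $d_{12}(a)=e_{12}(a)e_{21}(-a^{-1})e_{12}(a)e_{12}(-1)e_{21}(1)e_{12}(-1)$, the prescription $e_{ij}(c)\mapsto e_{ij}((c,P_+cP_+))$ followed by $\GL(p_2)$ replaces each multiplication operator $\mult c$ by the Toeplitz operator $\toeplitz c$. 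Thus $R\symb pq$ equals the Fredholm determinant of an explicit multiplicative commutator of $3\times3$ matrices over $\bo(\hilb_+)$ whose nonconstant entries are $\toeplitz p$, $\toeplitz{p^{-1}}$, $\toeplitz q$, $\toeplitz{q^{-1}}$.

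Assume first that $p$ and $q$ have winding number zero, so that $\log p$ and $\log q$ are single-valued smooth functions; this is the heart of the argument. The commutator lands in $\detop$ because Toeplitz quantization is multiplicative up to the trace-class corrections $\toeplitz f\toeplitz g-\toeplitz{fg}=-P_+\mult fP_-\mult gP_+$. Two features make the evaluation tractable. First, the lift in~\eqref{eq:ck-character-formula-steinberg-symbols} is determined only up to a central ($\Ctimes$) factor, and such a factor drops out of the commutator; a short computation then shows that $\det$ of the commutator is genuinely independent of the chosen lift. I may therefore replace the Toeplitz matrices above by the exponential lifts $e^{X}$, $e^{Y}$, where $X=\operatorname{diag}(\toeplitz{\log p},-\toeplitz{\log p},0)$ and $Y=\operatorname{diag}(\toeplitz{\log q},0,-\toeplitz{\log q})$---legitimate because $\toeplitz{e^{f}}\equiv e^{\toeplitz f}\bmod\tc$ for smooth $f$. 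Second, since $\addcommutator XY$ is trace class, the Helton--Howe determinant theory gives $\det(e^{X}e^{Y}e^{-X}e^{-Y})=\exp\tr\bigl(\addcommutator XY\bigr)$, the higher Baker--Campbell--Hausdorff terms contributing nothing because the trace of a trace-class commutator vanishes. As only the $(1,1)$-entries of $X$ and $Y$ overlap, this yields
\[
R\symb pq=\exp\tr\bigl(\addcommutator{\toeplitz{\log p}}{\toeplitz{\log q}}\bigr).
\]

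It then remains to identify this trace with the integral, which is elementary. Using the semicommutator identity above, a direct computation in the Fourier basis $\{e^{in\theta}\}$ gives
\[
\tr\bigl(\addcommutator{\toeplitz f}{\toeplitz g}\bigr)=\sum_{k>0}k\,(\ft f_{-k}\,\ft g_k-\ft f_k\,\ft g_{-k}),
\]
while expanding $\frac1{2\pi i}\int_{S^1}\log p\,d\log q$ in the Fourier coefficients of $\log p$ and $\log q$ produces exactly the same bilinear form. This establishes~\eqref{eq:R-steinberg-symbol} whenever $\int_{S^1}d\log p=0$, that is, when $p$ has winding number zero, for then the base-point term drops out.

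The main obstacle is the general case of nonzero winding number, which is precisely where the asymmetric correction $\log q(1)\int_{S^1}d\log p$ must be produced. If $\operatorname{wind}(p)=m\ne0$ then $\log p$ is multivalued and $\toeplitz p$ is no longer invertible (it is Fredholm of index $-m$), so the exponential lifts and the Fourier identity are unavailable---although the matrices assembled from elementary factors remain invertible, so the determinant itself stays well-defined. I would resolve this using the bimultiplicativity of $R\symb{\variable}{\variable}$ (inherited from the Steinberg symbol in $K_2$): factoring $p=z^{m}\tilde p$ with $\tilde p$ of winding zero reduces the problem to the zero-winding case already handled, together with the monomial symbols $R\symb z{\variable}$, whose evaluation is an index computation for the shift $\toeplitz z$. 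Integrating by parts against the multivalued $\log z$ produces a boundary contribution at the base point $1\in S^1$, which is exactly the term $-\log q(1)\int_{S^1}d\log p$ in~\eqref{eq:R-steinberg-symbol}; this book-keeping of the base point is the one genuinely delicate point of the argument.
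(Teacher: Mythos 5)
Your winding-number-zero argument is sound, and it is essentially the paper's case~(iii): the paper also factors $p=z^me^\alpha$, $q=z^ne^\beta$ and uses bimultiplicativity of the Steinberg symbol, then evaluates $R\symb{e^\alpha}{e^\beta}$ as a Fredholm determinant of a multiplicative commutator of Toeplitz lifts $\operatorname{diag}(\toeplitz{e^\alpha},\toeplitz{e^{-\alpha}},1)$ (invertible by Kre{\u\i}n--Widom--Devinatz), citing the Helton--Howe value $\exp\tr\bigl(\toeplitz{e^\alpha}\toeplitz{e^\beta}-\toeplitz{e^\beta}\toeplitz{e^\alpha}\bigr)=\exp\bigl(\sum_k k\ft\alpha(-k)\ft\beta(k)\bigr)$. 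Your exponential-lift variant is a legitimate and mildly simpler route, provided you actually prove the lemma $\toeplitz{e^f}\equiv e^{\toeplitz f}\bmod\tc$ for smooth~$f$ (true, by summing semicommutator corrections in trace norm), and your Fourier identification of $\tr\bigl(\addcommutator{\toeplitz{\log p}}{\toeplitz{\log q}}\bigr)$ with $\frac1{2\pi i}\int\log p\,d\log q$ is correct.

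The genuine gap is the nonzero-winding case, which you defer to a final sketch that is wrong in kind. After factoring, bimultiplicativity leaves the symbols $\symb zz^{mn}$, $\symb z{e^\beta}^m$, $\symb z{e^\alpha}^{-n}$, and these are \emph{not} index computations: the required values are $R\symb zz=-1$ and $R\symb z{e^\alpha}=\exp\bigl(-\ft\alpha(0)\bigr)$, and the latter varies continuously with $\alpha$, so no index-type (integer) invariant of the shift can produce it. Obtaining it is a genuine determinant evaluation: since $\toeplitz z=S$ is not invertible, the lift of $d_{12}(\mult z)$ must be corrected by the projection $P_0$, leading to $\det\bigl((S\toeplitz{e^\alpha}S^*+P_0)\toeplitz{e^\alpha}^{-1}\bigr)$, which the paper computes by splitting $\alpha=\alpha_++\alpha_-$ and applying Grothendieck's expansion $\det(1+A)=1+\tr A$ to explicit rank-one operators; a separate shift-algebra computation gives $R\symb zz=-1$, whence the factor $(-1)^{mn}$. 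Moreover, your claim that ``integrating by parts against the multivalued $\log z$'' produces the term $-\log q(1)\int_{S^1}d\log p$ conflates the two sides of the identity: that term lives on the calculus side and is absorbed by the Fourier expansion of the right-hand side (the paper's Lemma~\ref{lemma:bb-regulator-fourier-expansion}, giving $(-1)^{mn}\exp\bigl(n\ft\alpha(0)-m\ft\beta(0)+\sum_k k\ft\alpha(-k)\ft\beta(k)\bigr)$), whereas on the operator side the matching contributions $(-1)^{mn}$, $\exp\bigl(-m\ft\beta(0)\bigr)$, $\exp\bigl(n\ft\alpha(0)\bigr)$ must come out of the determinants just described. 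Without those evaluations your proof does not close.
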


We will prove this by expressing each side in terms of the Fourier coefficients of $\log p$ and $\log q$, and seeing that the resulting expressions coincide. The result for the right-hand side is relatively easy to obtain.

\begin{lemma}
\label{lemma:bb-regulator-fourier-expansion}
Let $p$, $q\in C^\infty(S^1)$ be nowhere vanishing functions. We have
\[
\begin{split}
&
\exp\left\{\frac1{2\pi i}\left(
  \int_{S^1}\log p\,d\log q-\log q(1)\int_{S^1}d\log p\right)\right\}\\
&\qquad
=(-1)^{mn}\exp\biggl(
  n\ft\alpha(0)-m\ft\beta(0)+\sum_{k\in\Z}k\ft\alpha(-k)\ft\beta(k)\biggr).
\end{split}
\]
Here $m$, resp.~$n$, is the winding number of~$p$, resp.~$q$; $\alpha$, $\beta\in C^\infty(S^1)$ satisfy $e^\alpha=p/z^m$, $e^\beta=q/z^n$; $\ft\alpha$, resp.~$\ft\beta$, is the Fourier transform of~$\alpha$, resp.~$\beta$.
\end{lemma}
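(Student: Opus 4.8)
The plan is to evaluate both sides after fixing the branches $\log p=im\theta+\alpha$ and $\log q=in\theta+\beta$, where $\theta\in[0,2\pi]$ is the angular coordinate and $z=e^{i\theta}$. Such smooth $\alpha$, $\beta$ exist precisely because $p/z^m$ and $q/z^n$ are nowhere vanishing of winding number zero, hence admit smooth logarithms; these branches are admissible since the left-hand side is insensitive to the branch choice. I would record at the outset that $d\log p=(im+\alpha')\,d\theta$ and $d\log q=(in+\beta')\,d\theta$, that $\int_{S^1}d\log p=2\pi i m$ (as $\alpha$ is periodic), and that $\log q(1)=\beta(0)$.

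First I would expand the main integral,
\[
\int_{S^1}\log p\,d\log q=\int_0^{2\pi}(im\theta+\alpha)(in+\beta')\,d\theta,
\]
and handle the four resulting terms separately. The purely linear term $\int_0^{2\pi}(im\theta)(in)\,d\theta=-2\pi^2 mn$ is the source of the sign: after division by $2\pi i$ it contributes $i\pi mn$, so that $e^{i\pi mn}=(-1)^{mn}$. The term $\int_0^{2\pi}\alpha\cdot in\,d\theta=2\pi i n\,\ft\alpha(0)$ produces the summand $n\,\ft\alpha(0)$ directly. Integrating $\int_0^{2\pi}(im\theta)\beta'\,d\theta$ by parts yields $2\pi i m(\beta(0)-\ft\beta(0))$, while expanding $\alpha$ and $\beta'$ in Fourier series (with $\ft\alpha(k)=\frac1{2\pi}\int_{S^1}\alpha e^{-ik\theta}\,d\theta$) and using orthogonality of the $e^{ik\theta}$ gives $\int_0^{2\pi}\alpha\beta'\,d\theta=2\pi i\sum_{k\in\Z}k\,\ft\alpha(-k)\ft\beta(k)$, the sum converging absolutely since $\alpha$, $\beta$ are smooth.

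Finally I would subtract the correction term $\log q(1)\int_{S^1}d\log p=2\pi i m\,\beta(0)$. The decisive cancellation is that this exactly annihilates the boundary value $\beta(0)=\sum_k\ft\beta(k)$ left over from the integration by parts, leaving only the mean $\ft\beta(0)$; this is the structural role of Beilinson's base-point correction, converting an ill-behaved pointwise value into a single Fourier mode. Dividing the whole bracket by $2\pi i$ then produces the exponent $i\pi mn+n\,\ft\alpha(0)-m\,\ft\beta(0)+\sum_{k\in\Z}k\,\ft\alpha(-k)\ft\beta(k)$, and exponentiating gives the asserted identity.

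Every step is elementary, so the only genuine point of care is the bookkeeping around multivaluedness: one must treat $im\theta$ as a bona fide non-periodic function of $\theta\in[0,2\pi]$ inside the integrals, which is exactly what makes the $-2\pi^2 mn$ term, and hence the sign $(-1)^{mn}$, appear, while at the same time checking that the final answer is unchanged under the $2\pi i\Z$-ambiguity in the chosen branches. I expect this interplay between the monodromy term and the base-point correction to be the main thing to get right; the Fourier computations themselves are routine.
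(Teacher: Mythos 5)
Your computation is correct and is essentially the paper's own argument: the paper's proof is the one-liner ``Develop the integrand as a Fourier series, then integrate,'' and your write-up simply carries this out in full, with the branch choice $\log p=im\theta+\alpha$, $\log q=in\theta+\beta$, the integration by parts producing the $(-1)^{mn}$ and $-m\ft\beta(0)$ terms, and the orthogonality computation giving $\sum_k k\ft\alpha(-k)\ft\beta(k)$. All the individual evaluations (the $-2\pi^2mn$ term, the cancellation of $2\pi im\,\beta(0)$ against the base-point correction, and the branch-independence check) are accurate.
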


\begin{proof}
Develop the integrand as a Fourier series, then integrate.
\end{proof}

\subsubsection{Reduction of the computation of \texorpdfstring{$R\symb pq$}{R\{p,q\}}}

Expressing $p$ and $q$ in the notation of Lemma~\ref{lemma:bb-regulator-fourier-expansion}, we have, by the skew-symmetry and bi-multiplicativity of the Steinberg symbol,
\[
\symb pq
=\symb{z^me^\alpha}{z^ne^\beta}
=\symb zz^{mn}\symb z{e^\beta}^m\symb z{e^\alpha}^{-n}\symb{e^\alpha}{e^\beta},
\]
from which it follows that
\[
R\symb pq
=R\symb zz^{mn}
 R\symb z{e^\beta}^m
 R\symb z{e^\alpha}^{-n}
 R\symb{e^\alpha}{e^\beta}.
\]
The computation of $R\symb pq$ is thereby reduced to that of the following three cases:
\begin{itemize}
\item[(i)] $R\symb zz$,
\item[(ii)] $R\symb z{e^\alpha}$,
\item[(iii)] $R\symb{e^\alpha}{e^\beta}$.
\end{itemize}
We shall first need a general expression for $R\symb pq$.

\subsubsection{General formulas for \texorpdfstring{$R\symb pq$}{R\{p,q\}}}

\paragraph*{First formula.}

By formula~\eqref{eq:ck-character-formula-steinberg-symbols}, we have
\begin{align}
\label{eq:R-steinberg-symbol-formula-commutator-lift}
R\symb pq
&=\ck\symb{\mult p}{\mult q}\nonumber\\
&=\det\comp\GL(p_2)\bigl(\multcommutator{\GL(p_1)^{-1}\{d_{12}(\mult p)\}}{\GL(p_1)^{-1}\{d_{13}(\mult q)\}}\bigr).
\end{align}

\paragraph*{Second formula.}
Alternatively, we can give a formula for~$R\symb pq$ based on formula~\eqref{eq:ck-character-formula-lift}.
In terms of the standard generators of the Steinberg group, the Steinberg symbol $\symb{\mult p}{\mult q}$ has the expression
\begin{equation}
\label{eq:steinberg-symbol-formula}
\symb{\mult p}{\mult q}
=w_{12}(1)h_{12}(-\mult{pq})w_{12}(1)h_{12}(\mult p)h_{12}(\mult q),
\end{equation}
where $w_{12}(a)=x_{12}(a)x_{21}(-a^{-1})x_{12}(a)$ and $h_{12}(a)=w_{12}(a)w_{12}(-1)$ (see~\cite[4.2.16]{Rosenberg:1994}). Denoting the Toeplitz operator with symbol~$f$ by $\toeplitz f$, let
\[
H(p)=
\begin{pmatrix}
  \bigl(2-\toeplitz p\toeplitz{1/p}\bigr)\toeplitz p & -1+\toeplitz p\toeplitz{1/p}\\
  1-\toeplitz{1/p}\toeplitz p & \toeplitz{1/p}
\end{pmatrix},\quad
J=
\begin{pmatrix}
  \phantom{-}0 & 1\\
  -1 & 0  
\end{pmatrix}.
\]
Direct computation with formula~\eqref{eq:ck-character-formula-lift} yields
\begin{equation}
\label{eq:ck-character-steinberg-generators}
\begin{split}
&\GL(p_2)\comp\tilde\ck\bigl(w_{12}(1)\bigr)=J,\\
&\GL(p_2)\comp\tilde\ck\bigl(h_{12}(\mult p)\bigr)=H(p),\\
&\GL(p_2)\comp\tilde\ck\bigl(h_{12}(-\mult{pq})\bigr)=H(-pq).
\end{split}
\end{equation}
Plugging \eqref{eq:steinberg-symbol-formula} and \eqref{eq:ck-character-steinberg-generators} into~\eqref{eq:ck-character-formula}, we get a second formula for $R\symb pq$:
\begin{equation}
\label{eq:R-steinberg-symbol-formula}
R\symb pq
=\det\comp\GL(p_2)\comp\tilde\ck\symb{\mult p}{\mult q}
=\det\bigl(JH(-pq)JH(p)H(q)\bigr).
\end{equation}

\subsubsection{Case (i): Computation of \texorpdfstring{$R\symb zz$}{R\{z,z\}}}
\label{sec:case-i}

Let $S=\toeplitz z$ be the shift operator, and let $S^*=\toeplitz{z^{-1}}$ be its adjoint. Let $P_i\colon\hilb_+\to\hilb_+$ be the projection onto the $i$th Fourier component. Using the relations
\[
S^*S=1,\quad
SS^*=1-P_0,\quad
S^2S^*\vphantom{S}^2=1-(P_0+P_1),
\]
one finds that
\[
JH(-z^2)J=
\begin{pmatrix}
  S^*\vphantom{S}^2 & 0\\
  -(P_0+P_1) & S^2
\end{pmatrix},\quad
H(z)=
\begin{pmatrix}
  S & -P_0\\
  0 & S^*
\end{pmatrix}.
\]
Plugging these matrices into formula~\eqref{eq:R-steinberg-symbol-formula}, we find that
\begin{equation}
\begin{split}
\label{eq:R-steinberg-symbol-formula-case-one}
R\symb zz
&=\det\bigl(JH(-z^2)JH(z)^2\bigr)\\
&=\det
\begin{pmatrix}
  1 & 0\\
  0 & SP_0+P_0S^*+1-P_0-P_1
\end{pmatrix}\\
&=\det(SP_0+P_0S^*+1-P_0-P_1)\\
&=-1,
\end{split}
\end{equation}
for $SP_0+P_0S^*+1-P_0-P_1$ is the operator that transposes the first two Fourier components.

\subsubsection{Case (ii): Computation of \texorpdfstring{$R\symb z{e^\alpha}$}{R\{z,exp(alpha)\}}}
\label{sec:case-ii}

Let $S$, $S^*$, $P_0$ be as in \S\ref{sec:case-i}. By formula~\eqref{eq:R-steinberg-symbol-formula-commutator-lift}, we have
\begin{align}
\label{eq:determinant}
R\symb z{e^\alpha}
&=\det\left[
  \begin{pmatrix}
    S & P_0 & 0\\
    0 & S^* & 0\\
    0 & 0 & 1
  \end{pmatrix},
  \begin{pmatrix}
    T(e^\alpha) & 0 & 0\\
    0 & 1 & 0\\
    0 & 0 & T(e^{-\alpha})
  \end{pmatrix}
  \right]\nonumber\\
&=\det
  \begin{pmatrix}
    (ST(e^\alpha)S^*+P_0)T(e^\alpha)^{-1} & 0 & 0\\
    0 & 1 & 0\\
    0 & 0 & 1  
  \end{pmatrix}\nonumber\\
&=\det\bigl((ST(e^\alpha)S^*+P_0)T(e^\alpha)^{-1}\bigr).
\end{align}
To facilitate the evaluation of this determinant, we split $\alpha$ into its nonnegative and negative Fourier components: $\alpha=\alpha_++\alpha_-$. Then, by the multiplicativity of the Steinberg symbol, we have
\[
R\symb z{e^\alpha}=R\symb z{e^{\alpha_+}}R\symb z{e^{\alpha_-}}.
\]
We will show that
\begin{equation}
\label{eq:R-steinberg-symbol-formula-case-two}
R\symb z{e^\alpha}=\exp\bigl(-\ft\alpha(0)\bigr)
\end{equation}
by showing separately that
\[
R\symb z{e^{\alpha_+}}=\exp\bigl(-\ft\alpha(0)\bigr),
\quad
R\symb z{e^{\alpha_-}}=1.
\]

\paragraph*{Computation of $R\symb z{e^{\alpha_+}}$.}

Since $\alpha_+$ has no negative Fourier components, neither does $e^{\alpha_+}$. Consequently $T(e^{\alpha_+})$ is a multiplication operator, and so $ST(e^{\alpha_+})=T(e^{\alpha_+})S$ and $T(e^{\alpha_+})^{-1}=T(e^{-\alpha_+})$. Since $SS^*=1-P_0$, the determinant~\eqref{eq:determinant} reduces to
\begin{equation}
\label{eq:determinant-nonnegative-case}
\begin{split}
R\symb z{e^{\alpha_+}}
&=\det\bigl((ST(e^{\alpha_+})S^*+P_0)T(e^{\alpha_+})^{-1}\bigr) \\
&=\det\bigl(1-T(e^{\alpha_+})P_0T(e^{-\alpha_+})+P_0T(e^{-\alpha_+})\bigr).
\end{split}
\end{equation}
The operator $-T(e^{\alpha_+})P_0T(e^{-\alpha_+})+P_0T(e^{-\alpha_+})$ is of trace class, for it is the rank~$1$ operator ($\xi\in\hilb_+$)
\[
\xi\mapsto\ftoperator(e^{-\alpha_+}\xi)(0)(1-e^{\alpha_+})
\quad
\text{($\ftoperator$: Fourier transform).}
\]
We shall use the \emph{Grothendieck determinant formula} \cite{Grothendieck:1956}, \cite{Simon:1977}:
\[
\det(1+A)=1+\tr(\wedge^1A)+\tr(\wedge^2A)+\tr(\wedge^3A)+\cdots
\quad
\text{($A$: trace class operator)}.
\]
Let $A=-T(e^{\alpha_+})P_0T(e^{-\alpha_+})+P_0T(e^{-\alpha_+})$. We see that
\[
Az^n
=\ftoperator(e^{-\alpha_+}z^n)(0)(1-e^{\alpha_+})
=\ftoperator(e^{-\alpha_+})(-n)(1-e^{\alpha_+}),
\]
which is~$0$ unless $n=0$, because $e^{-\alpha_+}$ has no negative Fourier components. Since $A$ has rank~$1$, the higher exterior powers $\wedge^n A$ vanish. Thus Grothendieck's formula drastically collapses, reducing the determinant~\eqref{eq:determinant-nonnegative-case} to the expression
\[
R\symb z{e^{\alpha_+}}
=\det(1+A)
=1+\tr A
=1+\ftoperator(e^{-\alpha_+})(0)\bigl(1-\ftoperator(e^{\alpha_+})(0)\bigr).
\]
Evidently $\ftoperator(e^{\pm \alpha_+})(0)=\exp\bigl(\pm\ft{\alpha_+}(0)\bigr)$ (by inspection of power series in~$z$), and so
\[
R\symb z{e^{\alpha_+}}=\exp\bigl(-\ft{\alpha_+}(0)\bigr)=\exp\bigl(-\ft\alpha(0)\bigr).
\]

\paragraph*{Computation of $R\symb z{e^{\alpha_-}}$.}

We proceed as above. Observing that $\exp(\overline{\alpha_-})$ has only nonnegative Fourier components, and that $T(e^{\alpha_-})=T(e^{\overline{\alpha_-}})^*$, we see, likewise, that $T(e^{\alpha_-})S^*=S^*T(e^{\alpha_-})$ and $T(e^{\alpha_-})^{-1}=T(e^{-\alpha_-})$. The determinant~\eqref{eq:determinant} in this case reduces to the expression
\begin{equation}
\label{eq:determinant-negative-case}
\begin{split}
R\symb z{e^{\alpha_-}}
&=\det\bigl((ST(e^{\alpha_-})S^*+P_0)T(e^{\alpha_-})^{-1}\bigr) \\
&=\det\bigl(1-P_0+P_0T(e^{-\alpha_-})\bigr).
\end{split}
\end{equation}
Let $A=-P_0+P_0T(e^{-\alpha_-})$. We see that $Az^n=0$ unless $n=0$, in which case it is the constant function $-1+\ftoperator(e^{-\alpha_-})(0)$. Once again, the higher exterior powers $\wedge^n A$ vanish. Applying Grothendieck's formula to~\eqref{eq:determinant-negative-case}, we get
\[
R\symb z{e^{\alpha_-}}
=\det(1+A)
=1+\tr A
=1-1+\ftoperator(e^{-\alpha_-})(0)
=\ftoperator(e^{-\alpha_-})(0).
\]
But $\alpha_-$ has no positive Fourier components, whence $\ftoperator(e^{-\alpha_-})(0)=1$ (again evident by inspection of power series in~$z$). Consequently
\[
R\symb z{e^{\alpha_-}}=1.
\]

\subsubsection{Case (iii): Computation of \texorpdfstring{$R\symb{e^\alpha}{e^\beta}$}{R\{exp(alpha),exp(beta)\}}}
\label{sec:case-iii}

A Toeplitz operator is invertible whenever its symbol is continuous, nowhere vanishing, and of winding number zero (the Kre{\u\i}n--Widom--Devinatz Theorem; see~\cite[7.27]{Douglas:1998}). Thus $\toeplitz{e^\alpha}$ and $\toeplitz{\smash{e^\beta}\vphantom{e^\alpha}}$ are invertible ($\alpha$, $\beta\in C^\infty(S^1)$), and so we have
\begin{align*}
\begin{pmatrix}
  (\mult{e^\alpha},\toeplitz{e^\alpha}) & 0 & 0\\
  0 & (\mult{e^{-\alpha}},\toeplitz{e^{-\alpha}}) & 0\\
  0 & 0 & 1
\end{pmatrix}\detopone
&\in\GL(p_1)^{-1}\{d_{12}(\mult{e^\alpha})\},\\
\begin{pmatrix}
  (\mult{e^\beta},\toeplitz{e^\beta}) & 0 & 0\\
  0 & 1 & 0\\
  0 & 0 & (\mult{e^{-\beta}},\toeplitz{e^{-\beta}})
\end{pmatrix}\detopone
&\in\GL(p_1)^{-1}\{d_{13}(\mult{e^\beta})\}.
\end{align*}
Using these preimages, formula~\eqref{eq:R-steinberg-symbol-formula-commutator-lift} yields
\begin{align*}
R\symb{e^\alpha}{e^\beta}
&=\det\left[
\begin{pmatrix}
  \toeplitz{e^\alpha} & 0 & 0\\
  0 & \toeplitz{e^{-\alpha}} & 0 \\
  0 & 0 & 1
\end{pmatrix},
\begin{pmatrix}
  \toeplitz{e^\beta} & 0 & 0\\
  0 & 1 & 0 \\
  0 & 0 & \toeplitz{e^{-\beta}}
\end{pmatrix}\right] \\
&=\det\bigl(\toeplitz{e^\alpha}\toeplitz{e^\beta}\toeplitz{e^\alpha}^{-1}\toeplitz{e^\beta}^{-1}\bigr).
\end{align*}
The value of this determinant has been computed by Helton and Howe \cite[p.~183]{Helton-Howe:1973} to be
\begin{equation}
\label{eq:R-steinberg-symbol-formula-case-three}
\exp\tr\bigl(\toeplitz{e^\alpha}\toeplitz{e^\beta}-\toeplitz{e^\beta}\toeplitz{e^\alpha}\bigr)
=\exp\bigl(\textstyle\sum_{k\in\Z}k\ft\alpha(-k)\ft\beta(k)\bigr).
\end{equation}

\begin{proof}[Proof of Proposition~\ref{prop:R-integral-formula}]

Combining the final computations of the above three cases---\eqref{eq:R-steinberg-symbol-formula-case-one}, \eqref{eq:R-steinberg-symbol-formula-case-two}, \eqref{eq:R-steinberg-symbol-formula-case-three}---we get
\begin{align*}
R\symb pq
&=R\symb zz^{mn}
 R\symb z{e^\beta}^m
 R\symb z{e^\alpha}^{-n}
 R\symb{e^\alpha}{e^\beta}\\
&=(-1)^{mn}
 \exp\bigl(-m\ft\beta(0)\bigr)
 \exp\bigl(n\ft\alpha(0)\bigr)
 \exp\bigl(\textstyle\sum_{k\in\Z}k\ft\alpha(-k)\ft\beta(k)\bigr).
\end{align*}
This matches the right-hand side of~\eqref{eq:R-steinberg-symbol} by Lemma~\ref{lemma:bb-regulator-fourier-expansion}.
\end{proof}

\begin{remark}
An alternative computation of cases~(ii) and~(iii) is given in the thesis of the second author \cite[\S3.8]{Kasatkin:2015}.
\end{remark}

\begin{remark}
A longer, more indirect proof of equality~\eqref{eq:R-steinberg-symbol} is possible by combining several works in the operator theory literature.

Let $p\colon\fm1\to\bo/\tc$, $a\mapsto P_+a P_+\bmod\tc$. Kaad has shown \cite{Kaad:2011a}, substantiating a claim of J.~Rosenberg, that the Connes--Karoubi character~$\ck$ coincides with the composition
\[
K_2(\fm1)\xrightarrow{\,K_2(p)\,}K_2(\bo/\tc)
\xrightarrow{\,\bd\,} K_1(\bo,\tc)
\xrightarrow{\,\det\,}\Ctimes,
\]
where $\bd$ is the boundary map in $K$-theory. This is essentially the Helton--Howe \emph{determinant invariant} \cite{Helton-Howe:1973}, whose $K$-theoretic significance was recognized by Brown \cite{Brown:1973}, \cite{Brown:1975}.

In terms of the determinant invariant, we get
\begin{align*}
R_S(\symb fg,\gamma)
&=\ck\comp K_2(\mu_\gamma)\symb fg\\
&=\det\comp\bd\comp K_2(p)\symb{\mult{f\comp\gamma}}{\mult{g\comp\gamma}}\\
&=\det\comp\bd\symb{T(f\comp\gamma)\bmod\tc}{T(g\comp\gamma)\bmod\tc}.
\end{align*}
The final term here has been shown by Migler \cite{Migler:2014} to equal $\jt{\toeplitz{f\comp\gamma}}{\toeplitz{g\comp\gamma}}$, where $\jointtorsion$ is the Carey--Pincus \emph{joint torsion} \cite{Carey-Pincus:1999}, while, previously, Carey and Pincus \cite[Prop.~1]{Carey-Pincus:1999} had shown that
\begin{align*}
&\jt{\toeplitz{f\comp\gamma}}{\toeplitz{g\comp\gamma}} \\
&\qquad=\exp\left\{\frac1{2\pi i}\left(
  \int_{S^1}\log(f\comp\gamma)\,d\log(g\comp\gamma)
    -\log(g\comp\gamma)(1)\int_{S^1}d\log(f\comp\gamma)\right)\right\}.
\end{align*}
Their proof of this equality is a long and delicate computation of signs, depending ultimately on Deligne's formula for tame symbols \cite[\S2.7]{Deligne:1991} to obtain the integral expression. In this way the validity of Proposition~\ref{prop:bb-regulator-agreement-on-steinberg-symbols} is also established.

The basis of all these works, including our own, is the work of Helton and Howe. As we have shown in our proof of Proposition~\ref{prop:R-integral-formula}, the elaboration of the results of~\cite{Helton-Howe:1973} that is necessary to establish equality~\eqref{eq:R-steinberg-symbol} is neither long, delicate, nor reliant on the theory of joint torsion. Nonetheless, the Carey--Pincus approach maintains some appeal, for it derives Proposition~\ref{prop:R-integral-formula} from a broader circle of ideas.
\end{remark}

\subsection{Continuity of \texorpdfstring{$R_S$}{R\_S}}
\label{sec:continuity}

Given smooth manifolds $M$ and $N$, with $M$ compact, we endow the spaces $C^r(M,N)$ ($0\le r<\infty$) with the weak topology, and the space $C^\infty(M,N)$ with the weakest topology that makes each of the inclusions $C^\infty(M,N)\injection C^r(M,N)$ continuous (see~\cite[Ch.~2]{Hirsch:1976}). When $M$ and $N$ are given base points, the subspace of base-point preserving maps $C^\infty(M,N)_*$ is endowed with the subspace topology.

\begin{lemma}
\label{lemma:continuity-regulator-loop-variable}
Let $Y$ be a complex analytic manifold. For each $u\in K_2(\O(Y))$, the map
\[
C^\infty(S^1,Y)\longto\Ctimes,
\quad
\gamma\mapsto\ck\comp K_2(\mu_\gamma)(u)
\]
is continuous. In particular, the map $R_S(u,\variable)\colon C^\infty(S^1,X\setminus S)_*\to\Ctimes$ is continuous.
\end{lemma}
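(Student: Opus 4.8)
The plan is to reduce the quantity $\ck\comp K_2(\mu_\gamma)(u)$ to a single Fredholm determinant depending on $\gamma$, and then to track the $\gamma$-dependence through the Banach-algebra structures of \S\ref{sec:ck-character}. I would fix once and for all a word representing $u$ in the standard generators of the Steinberg group, say $u=\prod_k x_{i_kj_k}(a_k)$ with $a_k\in\O(Y)$; this word does not depend on $\gamma$. Since $K_2(\mu_\gamma)$ carries $x_{ij}(a)$ to $x_{ij}(\mult{a\comp\gamma})$, and $P_+\mult{a\comp\gamma}P_+=\toeplitz{a\comp\gamma}$, formulas~\eqref{eq:ck-character-formula-lift} and~\eqref{eq:ck-character-formula} of Proposition~\ref{prop:ck-formula} give
\[
\ck\comp K_2(\mu_\gamma)(u)=\det\bigl(g(\gamma)\bigr),\qquad
g(\gamma)=\prod_k e_{i_kj_k}\bigl(\toeplitz{a_k\comp\gamma}\bigr)\in\GL(\bo).
\]
Because $u\in K_2(\O(Y))=\ker(\steinberg\to E)$, the image of this word in $E(\fm1)$ is trivial, so $g(\gamma)\equiv1\bmod\tc$ and the Fredholm determinant is defined.

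Next I would establish, for each fixed $a\in\O(Y)$, that $\gamma\mapsto\mult{a\comp\gamma}$ is continuous from $C^\infty(S^1,Y)$ into~$\fm1$. Composition with the smooth function~$a$ is continuous $C^\infty(S^1,Y)\to C^1(S^1,\C)$, $\gamma\mapsto a\comp\gamma$; and $q\mapsto\mult q$ is a bounded linear map $C^1(S^1)\to\fm1$, since $\norm{\mult q}_\bo=\norm q_\infty$ while the Hilbert--Schmidt estimate in the proof of Proposition~\ref{prop:loop-operator-fredholm-module} bounds $\hsnorm{\addcommutator{\mult q}F}$ by a constant times $\norm q_{C^1}$ (the kernel $|q(\theta)-q(\phi)|^2/\sin^2\frac12(\theta-\phi)$ being controlled by $\norm{q'}_\infty^2$). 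Consequently $\gamma\mapsto(\mult{a\comp\gamma},\toeplitz{a\comp\gamma})$ is continuous into $\fmext$: its $\fmext$-norm reduces to the $\fm1$-norm of the first component, the trace-class defect $P_+\mult{a\comp\gamma}P_+-\toeplitz{a\comp\gamma}$ being identically zero. As $\fmext$ is a Banach algebra and matrix multiplication is continuous, the word $w(\gamma)=\prod_k e_{i_kj_k}\bigl((\mult{a_k\comp\gamma},\toeplitz{a_k\comp\gamma})\bigr)$ is continuous in~$\gamma$ for the $\fmext$-norm.

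The crux is to upgrade this to trace-norm continuity of $g(\gamma)-1$. Here I would exploit the fiber-product structure~\eqref{eq:basic-fiber-product}: write $w(\gamma)=(A(\gamma),g(\gamma))$ with $A(\gamma)=\GL(p_1)(w(\gamma))$ a matrix over~$\fm1$ and $g(\gamma)=\GL(p_2)(w(\gamma))$ a matrix over~$\bo$. The vanishing of the $E(\fm1)$-image noted above means $A(\gamma)=1$ for every~$\gamma$, so the defining compatibility of the fiber product reads $1-g(\gamma)=P_+A(\gamma)P_+-g(\gamma)\in\tc$ entrywise. Since the $\fmext$-norm dominates the trace norm of exactly this defect, continuity of $w(\gamma)$ in the $\fmext$-norm forces $\gamma\mapsto g(\gamma)-1$ to be continuous in the trace norm. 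The Fredholm determinant is continuous on $1+\tc$ for the trace norm, so $\gamma\mapsto\det(g(\gamma))=\ck\comp K_2(\mu_\gamma)(u)$ is continuous; the final assertion then follows by taking $Y=X\setminus S$ and restricting to the subspace $C^\infty(S^1,X\setminus S)_*$. The main obstacle is precisely this last step: the individual operators $\toeplitz{a_k\comp\gamma}$ vary continuously only in operator norm, and it is solely the $K_2$-condition (the vanishing of $A(\gamma)$), together with the $\fmext$-norm, that confines the variation of $g(\gamma)$ to the trace-class ideal where $\det$ is continuous.
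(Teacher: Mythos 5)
Your proposal is correct, and its skeleton coincides with the paper's: both fix a word $u=\prod_k x_{i_kj_k}(a_k)$ whose image in $E(\O(Y))$ is trivial, use formulas \eqref{eq:ck-character-formula-lift}--\eqref{eq:ck-character-formula} to express $\ck\comp K_2(\mu_\gamma)(u)$ through the matrices $e_{i_kj_k}\bigl((\mult{a_k\comp\gamma},\toeplitz{a_k\comp\gamma})\bigr)$, and both rest analytically on the boundedness of the linear map $\xi\mapsto(\mult\xi,\toeplitz\xi)$ from $C^1(S^1)$ into $\fmext$, proved by the same Hilbert--Schmidt kernel estimate $\hsnorm{\addcommutator F{\mult\xi}}\le C\norm{\dot\xi}_\infty$. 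The genuine difference is the endgame. The paper stays upstairs: it factors the map as $C^\infty(S^1,Y)\to\prod_1^N C^\infty(S^1)\to\prod_1^N\fmext\to\prod_1^N\GL(\fmext)\to\GL(\fmext)/\detopone$ and then invokes the facts that $\detop/\detopone$ is topologically embedded in $\GL(\fmext)/\detopone$ and that $\det$ identifies $\detop/\detopone$ with $\Ctimes$ as topological groups---facts it asserts without proof, deferring to Connes--Karoubi. You instead descend at once to the Fredholm determinant $\det(g(\gamma))$ with $g(\gamma)=\GL(p_2)(w(\gamma))$, where $w(\gamma)=\prod_k e_{i_kj_k}\bigl((\mult{a_k\comp\gamma},\toeplitz{a_k\comp\gamma})\bigr)$, and obtain trace-norm continuity of $1-g(\gamma)$ by exhibiting it as the image of the continuous map $\gamma\mapsto w(\gamma)$ under the bounded linear defect map $(a,x)\mapsto P_+aP_+-x$ built into the fiber product \eqref{eq:basic-fiber-product}, the $K_2$-condition forcing $\GL(p_1)(w(\gamma))=1$; the standard continuity of $\det$ on $1+\tc$ in trace norm then concludes. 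What each buys: your route is more self-contained and explicit---it needs only the definition of the $\fmext$-norm and classical facts about infinite determinants, avoiding quotient topologies altogether, and it isolates the exact mechanism (the vanishing of the $p_1$-image confining all variation to the trace-class ideal) that the paper's embedding statement encapsulates; the paper's route is formally shorter and keeps the argument at the level of the topological central extension \eqref{eq:ck-central-extension}, the structure it reuses in its functoriality arguments such as Lemma~\ref{lem:reparameterization-invariance}.
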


We shall give a detailed proof, since the lemma will be a key tool in the proof of the main theorem (\S\ref{sec:main-theorem}).

\begin{proof}
In terms of the the standard generators of~$\steinberg(\O(Y))$, we can write $u\in K_2(\O(Y))$ as a product
\[
u=x_1(f_1)\cdots x_N(f_N),
\quad
\hbox{for some $f_\ell\in\O(Y)$ and $x_\ell=x_{i_\ell,j_\ell}$}
\]
with the property that the corresponding product of elementary matrices $e_1(f_1)\cdots e_N(f_N)$ is~$1$. By formula~\eqref{eq:ck-character-formula-lift} we then get
\begin{equation}
\label{eq:R_S-formula-explicit}
\begin{split}
&\ck\comp K_2(\mu_\gamma)(u) \\
&\qquad=e_1\bigl((\mult{f_1\comp\gamma},\toeplitz{f_1\comp\gamma})\bigr)\cdots e_N\bigl((\mult{f_N\comp\gamma},\toeplitz{f_N\comp\gamma})\bigr)\detopone.
\end{split}
\end{equation}
Thus the map $\gamma\mapsto\ck\comp K_2(\mu_\gamma)(u)$ can be decomposed into a composition of four maps
\begin{equation}
\label{eq:S-regulator-composition}
\begin{split}
C^\infty(S^1,Y)
  &\buildrel{\rm(i)}\over\longto\textstyle\prod_1^N C^\infty(S^1) \\
  &\buildrel{\rm(ii)}\over\longto\textstyle\prod_1^N\fmext \\
  &\buildrel{\rm(iii)}\over\longto\textstyle\prod_1^N\GL(\fmext) \\
  &\buildrel{\rm(iv)}\over\longto\GL(\fmext)/\detopone
\end{split}
\end{equation}
with (final) image in $\Ctimes\isom\detop/\detopone\subset\GL(\fmext)/\detopone$ (topological embedding of~$\Ctimes$). These maps are:
\begin{itemize}
\item[(i)]
The map $\gamma\mapsto(f_1\comp\gamma,\dots,f_N\comp\gamma)$.
\item[(ii)]
The map $(\xi_1,\dots,\xi_N)\mapsto\bigl((\mult{\xi_1},\toeplitz{\xi_1}),\dots,(\mult{\xi_N},\toeplitz{\xi_N})\bigr)$.
\item[(iii)]
The map $\bigl((a_1,q_1),\dots,(a_N,q_N)\bigr)\mapsto\bigl(e_1((a_1,q_1)),\dots,e_N((a_N,q_N))\bigr)$.
\item[(iv)]
The multiplication map on $\GL(\fmext)$ followed by the projection $\GL(\fmext)\to\GL(\fmext)/\detopone$.
\end{itemize}
It is evident that the maps~(i), (iii), (iv) are continuous.

As for the map~(ii), its continuity depends on the continuity of the linear map $C^\infty(S^1)\to\fmext$, $\xi\mapsto(\mult\xi,\toeplitz\xi)$, which can be factored as $C^\infty(S^1)\injection C^1(S^1)\to\fmext$. The weak topology of $C^1(S^1)$ is induced by the Banach space norm
\[
\norm{\xi}_{C^1(S^1)}=\norm{\xi}_\infty+\norm{\dot\xi}_\infty.
\]
Since the inclusion $C^\infty(S^1)\injection C^1(S^1)$ is continuous, it suffices to show that the Banach space map $\Phi\colon C^1(S^1)\to\fmext$, $\xi\mapsto(\mult\xi,\toeplitz\xi)$, is bounded. We have
\begin{align*}
\norm{\Phi(\xi)}_{\fmext}
&=\norm{\mult\xi}_{\fm1}+\norm{P_+\mult\xi P_+-\toeplitz\xi}_{\tc}\\
&=\norm{\mult\xi}_{\fm1}\\
&=\norm{\xi}_\infty+\hsnorm{\addcommutator F{\mult\xi}}.
\end{align*}
It will therefore suffice to show that $\hsnorm{\addcommutator F{\mult\xi}}\le C\norm{\dot\xi}_\infty$, for some constant~$C>0$ that is independent of~$\xi$. Recall that the kernel of the operator $\addcommutator F{\mult\xi}$ is
\[
K_\xi(\theta,\phi)=(\theta-\phi)K(\theta,\phi)\cdot{{\xi(\theta)-\xi(\phi)}\over{\theta-\phi}},
\]
where $K(\theta,\phi)=-1-i\cot{1\over2}(\theta-\phi)$; see formula~\eqref{eq:kernel-F}. By the Mean Value Theorem, we get the bound
\[
\hsnorm{\addcommutator F{\mult\xi}}
=\norm{K_\xi}_{L^2}
\le\sup_{\theta,\phi}|(\theta-\phi)K(\theta,\phi)|\cdot\norm{\dot\xi}_\infty.
\]
Given the explicit formula for~$K$, it is clear that the supremum is finite. The continuity of the map $\prod_1^N C^\infty(S^1)\to\prod_1^N\fmext$---and consequently, that of the composition~\eqref{eq:S-regulator-composition}---is thereby established.
\end{proof}

\subsection{Functorial properties of \texorpdfstring{$R_S$}{R\_S}}
\label{sec:functorial-properties}

It will be natural to use henceforth the concrete realization
\[
\hilb=L^2(S^1,|d\theta|^{\frac12})=\hilb_+\oplus\hilb_-
\quad
\text{(square-integrable $\textstyle\frac12$-densities)}
\]
where $\hilb_+$ is the Hilbert subspace generated by $e^{in\theta}\,|d\theta|^{\frac12}$, $n\geq0$, and $\hilb_-$ is the orthogonal complement of~$\hilb_+$. The group~$\Diff^+(S^1)$ of orientation preserving diffeomorphism of~$S^1$ acts on loops $\gamma\colon S^1\to X$ by reparameterization, $(\phi^*\gamma)(\theta)=\gamma(\phi(\theta))$, and on~$\hilb$ by the formula
\[
U_\phi(\xi(\theta)\,|d\theta|^{\frac12})
=\xi(\phi^{-1}(\theta))|(\phi^{-1})'(\theta)|^{\frac12}\,|d\theta|^{\frac12}.
\]
An argument similar to that in the proof of Proposition~\ref{prop:loop-operator-fredholm-module} shows that $U_\phi$ is a unitary operator in~$\GL_1(\fm1)$; see~\cite[Prop.~6.8.2]{Pressley-Segal:1986}. (For this reason the Hilbert space of $\frac12$-densities is more natural, in the present work, than the Hilbert space of functions.)

\begin{lemma}[Reparameterization invariance]
\label{lem:reparameterization-invariance}
Let $Y$ be a complex analytic manifold. For $\phi\in\Diff^+(S^1)$ and $\gamma\colon S^1\to Y$ a smooth loop, the following diagram commutes:
\[
\begin{CD}
K_2(\O(Y)) @>{K_2(\mu_{\phi^*\gamma})}>> K_2(\fm1) \\
@VV{K_2(\mu_\gamma)}V @VV{\ck}V \\
K_2(\fm1) @>{\ck}>> \Ctimes
\end{CD}
\]
\end{lemma}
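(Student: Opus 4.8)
The plan is to realize the reparameterized module $\mu_{\phi^*\gamma}$ as an inner twist of $\mu_\gamma$, and then to exploit the triviality of inner automorphisms on $K_2$. The only geometric input is a single conjugation identity: for every continuous $q\colon S^1\to\C$,
\[
\mult{q\comp\phi}=U_\phi^{-1}\,\mult q\,U_\phi
\quad\text{on }\hilb.
\]
I would verify this directly on half-densities, applying $U_\phi$, then $\mult q$, then $U_\phi^{-1}$ to a test element $\xi\,|d\theta|^{\frac12}$; the Jacobian factors $|(\phi^{-1})'|^{\frac12}$ and $|\phi'|^{\frac12}$ contributed by the two occurrences of the $\Diff^+(S^1)$-action cancel by the chain rule, leaving precisely multiplication by $q\comp\phi$. (This cancellation is exactly why the half-density representation is the convenient one here.)

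Granting the identity and substituting $f\comp\gamma$ for $q$, I get, for every $f\in\O(Y)$,
\[
\mu_{\phi^*\gamma}(f)=\mult{f\comp\gamma\comp\phi}=U_\phi^{-1}\,\mult{f\comp\gamma}\,U_\phi=\ad(U_\phi^{-1})\bigl(\mu_\gamma(f)\bigr).
\]
Because $U_\phi$, and hence $U_\phi^{-1}$, lies in $\GL_1(\fm1)$, and $\fm1$ is a unital algebra stable under the conjugation $a\mapsto U_\phi^{-1}aU_\phi$, the map $\ad(U_\phi^{-1})$ is an inner algebra automorphism of $\fm1$; thus the displayed line says $\mu_{\phi^*\gamma}=\ad(U_\phi^{-1})\comp\mu_\gamma$ as homomorphisms $\O(Y)\to\fm1$.

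Applying the functor $K_2$ then gives $K_2(\mu_{\phi^*\gamma})=K_2(\ad(U_\phi^{-1}))\comp K_2(\mu_\gamma)$. I would now invoke the standard fact that inner automorphisms act trivially on $K_2$, i.e.\ $K_2(\ad(u))=\mathrm{id}$ for every unit $u$ of a ring (cf.~\cite{Rosenberg:1994}); at the level of the identification $K_2(\fm1)\isom H_2(E(\fm1))$ this reflects that conjugation by $\mathrm{diag}(u,u,\dots)$ acts trivially on the homology of the elementary group, by the usual block-sum argument. Hence $K_2(\mu_{\phi^*\gamma})=K_2(\mu_\gamma)$---already stronger than the assertion---and composing with $\ck$ on the left yields $\ck\comp K_2(\mu_{\phi^*\gamma})=\ck\comp K_2(\mu_\gamma)$, the commutativity in question.

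The main obstacle is this last step rather than any computation. One must use the \emph{stable} triviality of inner automorphisms: $U_\phi$ need not admit a determinant-lift to $\GL_1(\fmext)$, so the tempting shortcut of conjugating the Connes--Karoubi central extension~\eqref{eq:ck-central-extension} by a single lift of $U_\phi$ stalls at the unstabilized level, and it is only after passing to $\GL$ (equivalently, to $H_2$ of the elementary group) that the twist becomes invisible. The conjugation identity itself is routine once the density bookkeeping is carried out.
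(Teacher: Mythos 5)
Your proof is correct, and its first half is exactly the paper's: the same half-density computation gives $\mult{q\comp\phi}=U_\phi^{-1}\mult{q}U_\phi$, hence $\mu_{\phi^*\gamma}=\ad(U_\phi^{-1})\comp\mu_\gamma$ and, by functoriality of $K_2$, the factorization $K_2(\mu_{\phi^*\gamma})=K_2(\ad(U_\phi^{-1}))\comp K_2(\mu_\gamma)$. The two arguments diverge at the key step. You invoke the standard fact that inner automorphisms act trivially on algebraic $K_2$ of any unital ring---via the Whitehead-lemma/block-sum argument: since $\operatorname{diag}(uI_m,u^{-1}I_m)$ lies in $E_{2m}$, the automorphism $E(\ad(u))$ agrees on each $E_m(\fm1)$ with an honest inner automorphism of $E(\fm1)$, hence acts as the identity on $H_2(E(\fm1))\isom K_2(\fm1)$---which yields the stronger conclusion $K_2(\mu_{\phi^*\gamma})=K_2(\mu_\gamma)$. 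The paper never claims this; it proves only the weaker statement $\ck\comp K_2(\ad_U)=\ck$ needed for the lemma, by identifying the central extension attached to $\ck\comp K_2(\ad_U)$ with the fiber product $\GL(\fmext)/\detopone\times_{E(\fm1)}E(\fm1)$, i.e.\ the pullback of~\eqref{eq:ck-central-extension} along $\GL(\ad_U)$, and exhibiting that fiber product as a graph, so that projection onto the first factor is an isomorphism fixing $\Ctimes$. What each route buys: the paper's stays entirely inside the definition of $\ck$ by its central extension, with no appeal to homological machinery; yours is shorter modulo the cited fact, proves more, and applies verbatim to any character on $K_2(\fm1)$, not just $\ck$. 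Your route is also arguably the more robust one: the paper's graph isomorphism covers $\GL(\ad_U)$ rather than the identity of $E(\fm1)$, and an isomorphism of central extensions over a nontrivial automorphism of the base does not by itself identify the associated characters; making that step airtight requires either a lift of $\ad_U$ to an automorphism of $\GL(\fmext)/\detopone$ fixing $\Ctimes$ pointwise---which runs into exactly the liftability question for $U_\phi$ (invertibility of a trace-class perturbation of $P_+U_\phi P_+$) that your closing paragraph raises---or the stable argument you give. So your cautionary remark is not a side issue; it points at the crux of why the stabilized statement is the right one to use.
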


\begin{proof}
Let $f\in\O(Y)$ and let $U=U_\phi$. One finds by direct computation that
\[
\mult{f\circ\phi^*\gamma}
=U^*\mult{f\circ\gamma}U
=\ad_U(\mult{f\circ\gamma}).
\]
By functoriality we get $K_2(\mu_{\phi^*\gamma})=K_2(\ad_U)\comp K_2(\mu_\gamma)$, where we regard $\ad_U$ as an (inner) automorphism of~$\fm1$. It therefore suffices to show that $\ck\comp K_2(\ad_U)=\ck$. We will do this by appealing to the determination of~$\ck$ by the central extension~\eqref{eq:ck-central-extension}.

The homomorphism $\GL(\ad_U)\colon E(\fm1)\to E(\fm1)$ induces a map of the central extension of~$\ck$ \eqref{eq:ck-central-extension} to the central extension of $\ck\comp K_2(\ad_U)$, which is the fiber product $\GL(\fmext)/\detopone\times_{E(\fm1)}E(\fm1)$:
\[
\begin{CD}
1 @>>> \Ctimes @>>> \GL(\fmext)/\detopone\times_{E(\fm1)}E(\fm1) @>{\pi_2}>> E(\fm1) @>>> 1 \\
@. @| @VV{\pi_1}V @VV{\GL(\ad_U)}V @. \\
1 @>>> \Ctimes @>>> \GL(\fmext)/\detopone @>{\GL(p_1)}>> E(\fm1) @>>> 1
\end{CD}
\]
We need to show that the two rows are equivalent, i.e., that $\pi_1$ is an isomorphism. But this is evident because
\begin{align*}
&\GL(\fmext)/\detopone\times_{E(\fm1)}E(\fm1) \\
&\qquad=\bigl\{\,\bigl((a_{ij},q_{ij})\detopone,(b_{ij})\bigr)\in\GL(\fmext)/\detopone\times E(\fm1)\colon
  (a_{ij})=(\ad_Ub_{ij})\,\bigr\} \\
&\qquad=\bigl\{\,\bigl(x,\GL(\ad_{U^*})\comp\GL(p_1)\comp\pi_1(x)\bigr)\colon x\in\GL(\fmext)/\detopone\,\bigr\}
\end{align*}
and $\pi_1$ is the projection onto the first factor.
\end{proof}

\begin{lemma}[$S$-compatibility]
\label{lem:S-compatibility}
For $S\subset S'$, let $i_{S,S'}\colon X\setminus S'\injection X\setminus S$ be the inclusion map, and let $\res S{S'}\colon\O(X\setminus S)\to\O(X\setminus S')$ be the restriction map. Then
\[
R_S(u,i_{S,S'}\comp\gamma')=R_{S'}\bigl(K_2(\res S{S'})(u),\gamma'),
\]
whenever $u\in K_2(\O(X\setminus S))$ and $\gamma'\colon S^1\to X\setminus S'$ is a smooth loop.
\end{lemma}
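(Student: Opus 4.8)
The plan is to strip away the Connes--Karoubi character and reduce the claim to a bare statement about the Fredholm modules $\mu_\gamma$, where it becomes a formal consequence of the functoriality of $K_2$. First I would unwind the definition of $R_S$ and $R_{S'}$ on both sides. Writing $\mu_{i_{S,S'}\comp\gamma'}\colon\O(X\setminus S)\to\fm1$ and $\mu_{\gamma'}\colon\O(X\setminus S')\to\fm1$ for the two loop modules, the asserted equality reads
\[
\ck\comp K_2(\mu_{i_{S,S'}\comp\gamma'})(u)
=\ck\comp K_2(\mu_{\gamma'})\comp K_2(\res S{S'})(u).
\]
So it suffices to produce a commuting triangle of algebra homomorphisms into $\fm1$, namely $\mu_{i_{S,S'}\comp\gamma'}=\mu_{\gamma'}\comp\res S{S'}$, and then apply $K_2$ and $\ck$.

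The key observation that makes the triangle commute is that the restriction map is literally precomposition with the inclusion. Since $S\subset S'$ gives $X\setminus S'\subset X\setminus S$, for every $f\in\O(X\setminus S)$ one has $\res S{S'}(f)=f\comp i_{S,S'}$. I would then compute directly, for each $f\in\O(X\setminus S)$,
\[
\mu_{\gamma'}\comp\res S{S'}(f)
=\mult{(f\comp i_{S,S'})\comp\gamma'}
=\mult{f\comp(i_{S,S'}\comp\gamma')}
=\mu_{i_{S,S'}\comp\gamma'}(f),
\]
using only the associativity of composition of maps. This establishes the desired factorization $\mu_{i_{S,S'}\comp\gamma'}=\mu_{\gamma'}\comp\res S{S'}$.

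From here the argument is immediate: applying the functor $K_2$ gives $K_2(\mu_{i_{S,S'}\comp\gamma'})=K_2(\mu_{\gamma'})\comp K_2(\res S{S'})$, and composing with $\ck$ and evaluating at $u$ yields exactly the two displayed sides of the lemma. The nearest thing to an obstacle is not any computation but a pair of well-definedness checks that make each side meaningful: on the left, that $i_{S,S'}\comp\gamma'$ is a legitimate argument for $R_S$, i.e.\ a \emph{smooth} loop in $X\setminus S$, which holds because it is the composite of the smooth loop $\gamma'$ with the holomorphic (hence smooth) inclusion $i_{S,S'}$; and on the right, that $\res S{S'}$ really maps $\O(X\setminus S)$ into $\O(X\setminus S')$, which holds because a function analytic off $S$ is a fortiori analytic off the larger set $S'$. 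Once these are noted, the lemma is a purely formal consequence of functoriality together with the identity $\res S{S'}(f)=f\comp i_{S,S'}$, and no delicate operator-theoretic input is required.
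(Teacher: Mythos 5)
Your proof is correct and follows exactly the same route as the paper, whose one-line proof invokes precisely the identity $\mu_{i_{S,S'}\comp\gamma'}=\mu_{\gamma'}\comp\res S{S'}$ together with functoriality of $K_2$. You merely spell out the verification of that identity (restriction is precomposition with the inclusion) and the routine well-definedness checks, which the paper leaves implicit.
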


\begin{proof}
As $R_S(u,\gamma)=\ck\comp K_2(\mu_\gamma)(u)$, the lemma follows from the identity $\mu_{i_{S,S'}\comp\gamma'}=\mu_{\gamma'}\comp\res S{S'}$ and functoriality of~$K_2$.
\end{proof}

\subsection{Main theorem}
\label{sec:main-theorem}

\begin{main-theorem}
The map $R_S$ coincides with the Beilinson--Bloch regulator~$r_S$: if $u\in K_2(\O(X\setminus S))$ and $\gamma\colon S^1\to X\setminus S$ is any smooth representative of $[\gamma]\in\pi_1(X\setminus S,x_0)$, then
\begin{equation}
\label{eq:connes-karoubi-equals-beilinson-bloch}
\pairing{r_S(u)}{[\gamma]}=\ck\comp K_2(\mu_\gamma)(u).
\end{equation}
In particular, the map $R_S(\variable,\gamma)$ is determined by the homotopy class of~$\gamma$, and $R_S$ is a homomorphism in both variables (upon passing to homotopy classes of loops).
\end{main-theorem}

\begin{proof}
Let $u\in K_2(\O(X\setminus S))$. Let $i_S\colon\O(X\setminus S)\to\C(X)$ be the inclusion map. By Matsumoto's Theorem \cite[4.3.15]{Rosenberg:1994}, there are functions $f_j,g_j\in\C(X)^\times$ (finite in number) such that
\[
K_2(i_S)(u)=\textstyle\prod_j\symb{f_j}{g_j}\in K_2(\C(X)).
\]
Let $S'\subset X$ to be a finite set satisfying the following two conditions:
\begin{enumerate}
\item $S'$ contains $S$ and all zeros of the $f_j$, $g_j$.
\item $K_2(\res S{S'})(u)=\prod_j\symb{f_j}{g_j}\in K_2(\O(X\setminus S'))$.
\end{enumerate}

To see that there is such a set, begin by letting $S''$ be the union of~$S$ and all the zeros of the $f_j$, $g_j$. Then $\prod_j\symb{f_j}{g_j}$ is a well-defined element of~$K_2(\O(X\setminus S''))$, so that we have, by functoriality of~$K_2$,
\[
K_2(i_{S''})\comp K_2(\res S{S''})(u)
=\textstyle\prod_j\symb{f_j}{g_j}
=K_2(i_{S''})(\textstyle\prod_j\symb{f_j}{g_j});
\]
in other words, the elements $K_2(\res S{S''})(u)$ and $\prod_j\symb{f_j}{g_j}$ of~$K_2(\O(X\setminus S''))$ have the same image in the limit $K_2(\C(X))=\injlim_S K_2(\O(X\setminus S))$. Therefore there is some (possibly larger) finite set $S'\supset S''$ in~$X$ such that
\begin{equation}
\label{eq:same-image-in-S-prime}
K_2(\res{S''}{S'})\bigl(K_2(\res S{S''})(u)\bigr)
=K_2(\res{S''}{S'})\bigl(\textstyle\prod_j\symb{f_j}{g_j}\bigr).
\end{equation}
The right-hand side of~\eqref{eq:same-image-in-S-prime} is clearly the element $\textstyle\prod_j\symb{f_j}{g_j}\in K_2(\O(X\setminus S'))$, while the left-hand side of~\eqref{eq:same-image-in-S-prime} reduces to $K_2(\res S{S'})(u)$. Hence this choice of~$S'$ satisfies the above two conditions.

Now let $\gamma\colon(S^1,1)\to(X\setminus S,x_0)$ be a smooth loop. Since reparameterizing~$\gamma$ does not change $R_S(u,\gamma)$ (Lemma~\ref{lem:reparameterization-invariance}), we may assume that~$\gamma$ is parameterized so that $x_0\notin S'$. Suppose $\gamma'\colon(S^1,1)\to(X\setminus S',x_0)$ is a smooth loop that is (smoothly) homotopic to~$\gamma$ \emph{within} $X\setminus S$. Then
\[
\begin{split}
R_S(u,\gamma')
&=R_{S'}(K_2(\res S{S'})(u),\gamma') 
&& \text{(Lemma~\ref{lem:S-compatibility})} \\
&=R_{S'}(\textstyle\prod_j\symb {f_j}{g_j},\gamma')
&& \text{(Condition~2)} \\
&=\textstyle\prod_jR_{S'}(\symb {f_j}{g_j},\gamma') \\
&=\textstyle\prod_j\pairing{r_{S'}\symb {f_j}{g_j}}{[\gamma']}
&& \text{(Proposition~\ref{prop:bb-regulator-agreement-on-steinberg-symbols})} \\
&=\pairing{r_{S'}(\textstyle\prod_j\symb {f_j}{g_j})}{[\gamma']} \\
&=\pairing{r_{S'}(K_2(\res S{S'})(u))}{[\gamma']}
&& \text{(Condition~2)} \\
&=\pairing{r_S(u)}{[\gamma]}.
\end{split}
\]
The final equality is a consequence of the known $S$-compatibility and homotopy invariance of the Beilinson--Bloch regulator.

Any neighborhood of~$\gamma$ (in the weak topology of~$C^\infty(S^1,X\setminus S)_*$) contains such a loop~$\gamma'$, for we can take $\gamma'$ to be an arbitrarily small (smooth) deformation of $\gamma$ that avoids the finite set of points of~$S'\setminus S$ that might lie on~$\gamma$. But recall that $R_S(u,\gamma)$ is continuous in~$\gamma$ (Lemma~\ref{lemma:continuity-regulator-loop-variable}). Thus $R_S(u,\gamma)$ is arbitrarily close to $\pairing{r_S(u)}{[\gamma]}=R_S(u,\gamma')$, whence $R_S(u,\gamma)=\pairing{r_S(u)}{[\gamma]}$.
\end{proof}

For the applications of the regulator to special values of $L$-functions---and more recently, to the Volume Conjecture and the quantizability criterion for curves \cite{Gukov-Sulkowski:2011}, \cite{Fuji-Gukov-Sulkowski:2012}---one passes to real coefficients:
\[
r_\eta^\R\colon K_2(\C(X))
\xrightarrow{\ r_\eta\ }\injlim\nolimits_SH^1(X\setminus S,\Ctimes)
\xrightarrow{\ \log|\variable|\ }\injlim\nolimits_SH^1(X\setminus S,\R).
\]
We record the formula for~$r_\eta^\R$ that one gets from~\eqref{eq:connes-karoubi-equals-beilinson-bloch}.

\begin{corollary}[Beilinson \cite{Beilinson:1980}]
Let $f$ and~$g$ be meromorphic functions on~$X$, and let $\gamma\colon S^1\to X$ be a smooth loop that avoids the zeros and poles of~$f$ and~$g$. We have
\begin{equation}
\label{eq:log-regulator-pairing}
\begin{split}
\pairing{r_\eta^\R\symb fg}{[\gamma]}
&=\frac1{2\pi}\int_\gamma\log f\,d(\arg g)-\log g\,d(\arg f) \\
&=\log|\ck\comp K_2(\mu_\gamma)\symb fg|.
\end{split}
\end{equation}
\end{corollary}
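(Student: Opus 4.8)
The plan is to derive both equalities from results already established: the second equality is immediate from the Main Theorem, while the first is a direct computation out of Beilinson's monodromy formula~\eqref{eq:monodromy-formula}. Since $\gamma$ avoids the zeros and poles of $f$ and $g$, I would first fix a finite set $S\subset X$ containing all of these points, so that $\symb fg\in K_2(\O(X\setminus S))$ and $\gamma$ is a loop in $X\setminus S$. The Main Theorem~\eqref{eq:connes-karoubi-equals-beilinson-bloch} then gives $\pairing{r_S\symb fg}{[\gamma]}=\ck\comp K_2(\mu_\gamma)\symb fg$. Applying $\log|\variable|$ to both sides, and recalling that $r_\eta^\R=\log|\variable|\comp r_\eta$, the left side becomes $\pairing{r_\eta^\R\symb fg}{[\gamma]}$ and the right side becomes $\log|\ck\comp K_2(\mu_\gamma)\symb fg|$. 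This disposes of the second equality at once.

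For the first equality I would take $\log|\variable|$ of Beilinson's formula~\eqref{eq:monodromy-formula}. Since $\log|\exp(w)|=\operatorname{Re}(w)$ and $\operatorname{Re}(w/2\pi i)=\operatorname{Im}(w)/2\pi$, this yields
\[
\pairing{r_\eta^\R\symb fg}{[\gamma]}
=\frac1{2\pi}\operatorname{Im}\left(
  \int_\gamma\log f\,d\log g-\log g(x_0)\int_\gamma d\log f\right).
\]
Writing $\log f=\log|f|+i\arg f$ and $\log g=\log|g|+i\arg g$ and expanding, the imaginary part of $\int_\gamma\log f\,d\log g$ is $\int_\gamma(\log|f|\,d\arg g+\arg f\,d\log|g|)$, so the task reduces to reconciling the spurious middle term with the base-point correction.

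The crux is an integration by parts combined with a careful treatment of multivaluedness. Because $\log|f|$ is single-valued along the closed loop $\gamma$, one has $\int_\gamma d\log|f|=0$, while $\int_\gamma d\arg f=2\pi n$ with $n$ the winding number of $f\comp\gamma$; hence $\int_\gamma d\log f=2\pi i n$ is purely imaginary, and the correction term contributes $\operatorname{Im}(\log g(x_0)\cdot 2\pi i n)=2\pi n\log|g(x_0)|$. Integrating by parts over $\gamma$, and using that $\arg f$ increases by $2\pi n$ while $\log|g|$ returns to its base-point value, gives
\[
\int_\gamma\arg f\,d\log|g|=2\pi n\log|g(x_0)|-\int_\gamma\log|g|\,d\arg f.
\]
Substituting, the two occurrences of $2\pi n\log|g(x_0)|$ cancel, leaving $\frac1{2\pi}\int_\gamma(\log|f|\,d\arg g-\log|g|\,d\arg f)$; this is the asserted first expression, the notation $\log f$, $\log g$ there being understood with moduli (equivalently, the imaginary contributions to $\int_\gamma\log f\,d\arg g$ and $\int_\gamma\log g\,d\arg f$ drop out upon taking $\operatorname{Re}$).

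I expect the main obstacle to be precisely this bookkeeping of branch cuts: one must fix continuous branches of $\arg f$ and $\arg g$ along $\gamma$ and verify that the base-point correction term in~\eqref{eq:monodromy-formula} is \emph{exactly} what annihilates the boundary contribution of the integration by parts. That cancellation is what makes the final integral simultaneously independent of the base point and antisymmetric in $f$ and $g$, and it is the structural reason the correction term is present in Beilinson's formula to begin with.
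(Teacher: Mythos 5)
Your proposal is correct and follows exactly the route the paper intends: the paper gives no explicit proof, merely ``recording'' the corollary as the formula one gets from the Main Theorem~\eqref{eq:connes-karoubi-equals-beilinson-bloch}, and your derivation---the second equality by applying $\log|\variable|$ to~\eqref{eq:connes-karoubi-equals-beilinson-bloch}, the first by taking the log-modulus of Beilinson's formula~\eqref{eq:monodromy-formula} and eliminating the base-point correction term via integration by parts---supplies precisely the details left implicit. Your bookkeeping is accurate, including the cancellation of the two $2\pi n\log|g(x_0)|$ terms and the observation that $\log f$, $\log g$ in the stated integral must be read as $\log|f|$, $\log|g|$ for the pairing to land in~$\R$.
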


\section{Concluding remarks}
\label{sec:conclusion}

\subsection{An intrinsic approach to the reconstruction theorem}

Our proof of the reconstruction of the Beilinson--Bloch regulator is not ideal: we have deduced that $R_S(u,\variable)$ factors through a homomorphism $\pi_1(X\setminus S)\to\Ctimes$ by virtue of its identification with the Beilinson--Bloch regulator, whereas only an intrinsic proof of this fact could be deemed truly satisfactory. Thus while the expression of the Beilinson--Bloch regulator in terms of the Connes--Karoubi character is itself canonical, it cannot (yet) be construed as a completely independent approach to the regulator.

A more satisfactory approach to the reconstruction theorem would entail giving intrinsic proofs of the following facts:
\begin{description}
\item[\normalfont\scshape Homotopy invariance:]
\slshape Whenever $\gamma_0$, $\gamma_1$ are homotopic we have $R_S(u,\gamma_0)=R_S(u,\gamma_1)$.
\item[\normalfont\scshape Multiplicativity:]
\slshape The induced map $R_S(u,\variable)\colon\pi_1(X\setminus S)\to\Ctimes$ is a homomorphism.
\end{description}

Concerning homotopy invariance, one might try to prove it by appealing to Karoubi's geometric characterization of elements of~$K_2(A)$ as virtual flat $A$-bundles over the sphere~$S^2$ \cite[3.11]{Karoubi:1987}, in conjunction with the geometric interpretation of the Connes--Karoubi character \cite[4.10]{Connes-Karoubi:1988}. Alternatively, given a smooth one-parameter family $\gamma_t$ of (based) loops in~$X\setminus S$, one might try to use the explicit formula~\eqref{eq:R_S-formula-explicit} to show that $\frac{d}{dt}\bigl(\ck\comp K_2(\mu_{\gamma_t})(u)\bigr)=0$. We have been unable to carry out either proposal.

Granting homotopy invariance and multiplicativity, it would then follow from Lemma~\ref{lem:S-compatibility} that, whenever $S\subset S'$, the diagram
\[
\begin{CD}
K_2(\O(X\setminus S)) @>{K_2(\res S{S'})}>> K_2(\O(X\setminus S')) \\
@VV{R_S}V @VV{R_{S'}}V \\
\Hom(\pi_1(X\setminus S,x_0'),\Ctimes) @>{(i_{S,S'})_*}>> \Hom(\pi_1(X\setminus S',x_0'),\Ctimes)
\end{CD}
\]
commutes (notation as in Lemma~\ref{lem:S-compatibility}). In this way a homomorphism
\[
R_\eta\colon K_2(\C(X))\longto H^1(\Spec\C(X),\Ctimes)
\]
would be induced.

That $R_\eta$ coincides with the Beilinson--Bloch regulator~\eqref{eq:regulator-generic-point} would follow again from Proposition~\ref{prop:bb-regulator-agreement-on-steinberg-symbols}. It is unclear to us whether it might be possible to obviate even that, by appealing to the work of Feliu \cite{Feliu:2011} and Gillet \cite{Gillet:1981} on axiomatic characterizations of Chern character maps.

\subsection{The Heisenberg group and the regulator}
\label{sec:heisenberg-group-construction}

There is a beautiful geometric construction of the Beilinson--Bloch regulator due to Bloch, Ramakrishnan, and Deligne \cite{Bloch:1981}, \cite{Ramakrishnan:1981}. We give a rapid sketch of it.

For a ring~$A$, let $H(A)$ be the Heisenberg group in three variables:
\[
H(A)=\left\{\,\left(
\begin{smallmatrix}
1 & x & z \\
0 & 1 & y \\
0 & 0 & 1
\end{smallmatrix}\right)\colon x,\ y,\ z\in A\,\right\}.
\]
The complex $3$-manifold $H(\Z)\bs H(\C)$ has a projection to~$\Ctimes\times\Ctimes$,
\[
H(\Z)\left(
\begin{smallmatrix}
1 & x & z \\
0 & 1 & y \\
0 & 0 & 1
\end{smallmatrix}\right)
\mapsto (e^{2\pi ix},e^{2\pi iy}),
\]
making $H(\Z)\setminus H(\C)$ into a principal $\Ctimes$-bundle over~$\Ctimes\times\Ctimes$. This bundle admits a canonical holomorphic connection~$\cnx$ with curvature $(du/u)\wedge(dv/v)$ (in terms of coordinates on $\Ctimes\times\Ctimes$). If $f$ and~$g$ are meromorphic functions on~$X$, and if $S\subset X$ is a finite set containing the zeros and poles of~$f$ and~$g$, then the map $(f,g)\colon X\setminus S\to\Ctimes\times\Ctimes$ pulls back the principal $\Ctimes$-bundle $(H(\Z)\setminus H(\C),\cnx)$ to a principal $\Ctimes$-bundle on~$X\setminus S$ whose associated line bundle $r\symb fg$ is flat, and may therefore be regarded as an element of~$H^1(X\setminus S,\Ctimes)$. The bundle $r\symb fg$ satisfies all the (Steinberg) symbol properties with respect to~$f$ and $g$---the only nontrivial one being the Steinberg relation that $r\symb f{1-f}$ is the trivial line bundle (whenever $1-f$ is also nowhere vanishing on~$X\setminus S$). Functoriality and the naturality of pull-backs reduces this assertion to the triviality of $r\symb f{1-f}$ in the case $X=P^1(\C)$, $S=\{0,1,\infty\}$, $f=\text{identity}$; this is established by an essential use of the dilogarithm function $\sum_{n=1}^\infty z^n/n^2$. Thus $r$ defines a homomorphism $K_2(\C(X))\to\injlim_S H^1(X\setminus S,\Ctimes)$. To see that this again recovers the Beilinson--Bloch regulator, the monodromy of~$r\symb fg$ around a loop~$\gamma$ in~$X\setminus S$ is computed and seen to recover Beilinson's formula~\eqref{eq:monodromy-formula}.

It would be desirable to understand, concretely, the relationship between this geometric description of the regulator and our operator-theoretic one.

\subsection{Entropy, regulators, determinants (Deninger's problem)}
\label{sec:entropy-regulators-determinants}

Lastly, let us sketch our original motivation for undertaking a construction of the Beilinson--Bloch regulator in the Fredholm-module context, that is, in the setting of noncommutative geometry. Though the issues we raise will be somewhat vaguely and imprecisely posed, we believe they warrant a closer examination, nonetheless.

Deninger has established curious formulas equating the entropy of~$\Z^d$-actions, regulator pairings (of the kind that we have consider in this article), Mahler measure, and the Fuglede--Kadison determinant for the von~Neumann algebra of~$\Z^d$ \cite[\S2.4]{Deninger:2012}. An example of this is the following assertion. Let $\torus^d$ be the (real) $d$-torus, and let $P\in\Z[\Z^d]$ with $P\in\ell^1(\Z^d)^\times$. Deninger shows~\cite[Cor.~11]{Deninger:2012} that
\begin{equation}
\label{eq:deningers-strange-formula}
\log\det\nolimits_{\vna\Z^d}(P)
=h(P)
=\pairing{r\{P,e_1,\dots,e_d\}}{(2\pi i)^{-d}[\torus^d]}.
\end{equation}
Here $\det\nolimits_{\vna\Z^d}$ is the Fuglede--Kadison determinant of the group von~Neumann algebra~$\vna\Z^d$; $h(P)$ is the topological entropy of the action of~$P$ on the Pontrjagin dual of the discrete group $\Z[\Z^d]/\Z[\Z^d]P$; $r$ is the Beilinson regulator and $r\{P,e_1,\dots,e_d\}$ is a class in the (singular) cohomology $H^d\bigl((\Ctimes)^d\setminus\{P=0\},\R(d)\bigr)$; $[\torus^d]$ is the fundamental class in homology.

In lieu of a description of the Beilinson regulator~$r$ and of the class~$r\{P,e_1,\dots,e_d\}$ (see~\cite[\S2.4]{Deninger:2012} and~\cite[\S1]{Deninger:1997}), one may still gain insight into the substance of the equalities~\eqref{eq:deningers-strange-formula} by examining the case~$d=1$:
\begin{equation}
\label{eq:deningers-strange-formula-special-case}
\begin{split}
\log\det\nolimits_{\vna\Z}(P)
&=\frac1{2\pi i}\int_\torus\log|P(z)|\,\frac{dz}z \\
&=\pairing{r\symb P{e_1}}{(2\pi i)^{-1}[\torus]}
&&\text{\cite[p.~276]{Deninger:1997}} \\
&=\log|\ck\comp K_2(\mu)\symb Pz|
&&\text{(formula~\eqref{eq:log-regulator-pairing}).}
\end{split}
\end{equation}

From the work of Li and Thom \cite{Li-Thom:2012} it is known that the first equality of~\eqref{eq:deningers-strange-formula} is valid in much greater generality: it remains true when $\Z^d$ is replaced by an arbitrary countable discrete amenable group~$\Gamma$ and $P\in\Z\Gamma$ is any unit in~$\vna\Gamma$. But in this generality little is understood about what should then correspond to the right-hand term of~\eqref{eq:deningers-strange-formula}---neither the regulator nor the pairing. Deninger poses the problem of making sense of the right-hand side of~\eqref{eq:deningers-strange-formula} for certain \emph{noncommutative}~$\Gamma$, for example those that are polycyclic.

An interesting example of such a group is the integral Heisenberg group~$H(\Z)$, which we encountered above. Since $H(\Z)$ is isomorphic to the semi-direct product $\Z\ltimes\Z^2$, the group von~Neumann algebra $\vna H(\Z)$ has a direct integral decomposition
\begin{equation}
\label{eq:vna-heisenberg-group}
\vna H(\Z)
\isom\textstyle\int^\oplus_\torus L^\infty(\torus)\rtimes_\theta\Z\,d\theta.
\end{equation}
Geometrically, one views this decomposition as an interpretation of $\vna H(\Z)$ as the $L^\infty$-algebra of the space of noncommutative tori fibered over~$\torus$; indeed, this is consistent with the interpretation of noncommutative tori as degenerations of elliptic curves $E_q=\Ctimes\!/q^\Z$ as $|q|\to1$.

Consideration of the decomposition~\eqref{eq:vna-heisenberg-group} in conjunction with formula~\eqref{eq:deningers-strange-formula-special-case} suggests that in order to give meaning for~$\vna H(\Z)$ of the regulator pairing in~\eqref{eq:deningers-strange-formula}, one should seek an analogue of the Beilinson--Bloch regulator---at least at the level of a regulator pairing for special $K$-theory elements---for noncommutative tori as \emph{smooth}, and not simply as measurable, noncommutative spaces.

To this end, an analogue for noncommutative tori of the structure sheaf~$\O$ may be required. Soibelman and Vologodsky \cite{Soibelman-Vologodsky:2003} have defined a reasonable candidate for the category of coherent sheaves on a noncommutative elliptic curve $\Ctimes/q^\Z$, $|q|=1$: it is the category of modules over the crossed-product algebra $\O(\Ctimes)\crossprod q^\Z$ that are finitely presentable over~$\O(\Ctimes)$. Together with the Soibelman--Vologodsky theory, our Fredholm-module framework for the Beilinson--Bloch regulator might therefore provide an appropriate setting for the study of Deninger's problem for $\Gamma=H(\Z)$.

\bibliographystyle{plain}
\small
\bibliography{ehvk}

\end{document}